\newcommand{\blocktheorem}[1]{%
  \csletcs{old#1}{#1}
  \csletcs{endold#1}{end#1}
  \RenewDocumentEnvironment{#1}{o}
    {\par\addvspace{1.5ex}
     \noindent\begin{minipage}{\textwidth}
     \IfNoValueTF{##1}
       {\csuse{old#1}}
       {\csuse{old#1}[##1]}}
    {\csuse{endold#1}
     \end{minipage}
     \par\addvspace{1.5ex}}
}
\newcommand{\Mspace}        	{{\mathbb M}}
\newcommand{\Rspace}        	{{\mathbb R}}
\newcommand{\Nspace}        	{{\mathbb N}}
\newcommand{\Zspace}        	{{\mathbb Z}}
\newcommand{\mono}		{\hookrightarrow}
\newcommand{\Ccat}          	{{\mathsf{C}}}
\newcommand{\Finset}          	{{\mathsf{FinSet}}}
\newcommand{\Set}          	{{\mathsf{Set}}}
\newcommand{\Vect}          	{{\mathsf{Vec}}}
\newcommand{\End}          	{{\mathsf{End}}}
\newcommand{\Pmod}      	{{\mathsf{PMod}}}
\newcommand{\PDgm}      	{{\mathsf{PDgm}}}
\newcommand{\Dgm}          	{{\mathsf{Dgm}}}
\newcommand{\Rep}          	{{\mathsf{Rep}}}
\newcommand{\Ab}      	    	{{\mathsf{Ab}}}
\newcommand{\Rect}      	    	{{\mathsf{Rect}}}
\newcommand{\Finab}      	    	{{\mathsf{FinAb}}}
\newcommand{\Agroup}          	{{\mathcal{A}}}
\newcommand{\Bgroup}          	{{\mathcal{B}}}
\newcommand{\Ggroup}          	{{\mathcal{G}}}
\newcommand{\Igroup}          	{{\mathcal{I}}}
\newcommand{\image}		{\mathrm{im}}
\newcommand{\Dist}			{\mathsf{d}}
\newcommand{\field}			{\mathsf{k}}
\newcommand{\ee}			{\varepsilon}
\newcommand{\dd}			{\delta}
\newcommand\define[1]		{{\bf{#1}}}
\newcommand\module[1]		{{\mathsf{#1}}}	
\newcommand{\Grow}         	 {{\mathsf{Grow}}}
\def\moverlay{\mathpalette\mov@rlay}
\def\mov@rlay#1#2{\leavevmode\vtop{%
   \baselineskip\z@skip \lineskiplimit-\maxdimen
   \ialign{\hfil$\m@th#1##$\hfil\cr#2\crcr}}}
\newcommand{\charfusion}[3][\mathord]{
    #1{\ifx#1\mathop\vphantom{#2}\fi
        \mathpalette\mov@rlay{#2\cr#3}
      }
    \ifx#1\mathop\expandafter\displaylimits\fi}
\newtheoremstyle{amit}
{7pt}
{7pt}
{}
{7pt}
{\bf}
{:}
{.5em}
{}
\begin{document}

\theoremstyle{amit}

\newcommand{\goodgap}{\hspace{\subfigtopskip}\hspace{\subfigbottomskip}}

\newtheorem{defn}{Definition}[section]
\newtheorem{defn_two}{Definition}[subsection]
\newtheorem{prop}{Proposition}[section]
\newtheorem{observe}{Observation}[section]
\newtheorem{corr}{Corollary}[section]
\newtheorem{lem}{Lemma}[section]
\newtheorem{thm}{Theorem}[section]
\newtheorem{rmk}{Remark}[section]
\newtheorem{ex}{Example}[section]
\newtheorem{ex_two}{Example}[subsection]

\blocktheorem{thm}

\renewcommand\Affilfont{\itshape\small}
\title{Generalized Persistence Diagrams
\thanks{This material is based upon work supported by the National Science Foundation under agreement
No.\ DMS-1128155. Any opinions, findings and conclusions or recommendations
expressed in this material are those of the author and do not necessarily reflect the views of the 
National Science Foundation.}
}
\author{Amit Patel}
\date{}
\affil{School of Mathematics \\ Institute for Advanced Study \\ Princeton, NJ 08540}
\maketitle

\begin{abstract}
We generalize the persistence diagram of Cohen-Steiner, Edelsbrunner, and Harer to the setting of constructible persistence
modules valued in a symmetric monoidal category.
We call this the \emph{type $\Agroup$} persistence diagram of a persistence module.
If the category is also abelian, then we define a second \emph{type $\Bgroup$} persistence diagram.
In addition, we show that both diagrams are stable to all sufficiently small perturbations of the module.
\end{abstract}

\section{Introduction}

Let $f : \Mspace \to \Rspace$ be a Morse function on a compact manifold $\Mspace$.
The function $f$ filters $\Mspace$ by sublevel sets $\Mspace_{f \leq r} = \{ x \in \Mspace \; | \; f(x) \leq r \}.$
Apply homology with coefficients in a field and we call the resulting object $\module{F}$ a \emph{constructible persistence module of vector spaces}.
The \emph{persistence diagram} and the \emph{barcode} are two invariants of a persistence module
obtained as follows.
	\begin{itemize}
	\item{By Images:}
	Edelsbrunner, Letscher, and Zomorodian \cite{Edelsbrunner2002} define the 
	\emph{persistent homology group} $\module{F}_s^t$, for $s < t$, as the image of $\module{F}(s < t)$.
	Cohen-Steiner, Edelsbrunner, and Harer \cite{CSEdH} define the \emph{persistence diagram} 
	of $\module{F}$ as a
	finite set of points in the plane above the diagonal satisfying the following property.
	For each $s < t$, the number of points in the upper-left quadrant defined by $(s,t)$ 
	is the rank of $\module{F}_s^t$.
	
	\item{By Indecomposables:} 
	The module $\module{F}$ is isomorphic to a finite direct sum of indecomposable persistence modules
	$\module{F} \cong \module{F}_1 \oplus \cdots \oplus \module{F}_n.$
	Any two ways of writing $\module{F}$ as a sum of indecomposables are the same up to
	a reordering of the indecomposables.
	Furthermore, each indecomposable $\module{F}_i$ is an \emph{interval persistence} module.
	That is, there are a pair of values $r < t$, where $t$ may be infinite, such that 
	$F_i(s)$ is a copy of the field for all values $r \leq s < t$ and zero elsewhere.
	\footnote{The interval persistence module $\module{F}_i$ is fully described by the half open 
	interval $[s,t)$.}
	Zomorodian and Carlsson define the \emph{barcode} of $\module{F}$ 
	as its list of indecomposables~\cite{ZC2005}.
	See also Carlsson and de Silva~\cite{Carlsson2010}.
	\end{itemize}
A barcode translates to a persistence diagram by plotting the left endpoint versus the right endpoint of each interval persistence module.
A persistence diagram translates to a barcode by turning each point $(s,t)$ in to an interval persistence module starting at $s$ and ending at $t$.
In this way, the persistence diagram is equivalent to a barcode.
However, the two definitions are very different in philosophy.

Suppose the homology of each sublevel set $\Mspace_{f \leq r}$ is calculated using
integer coefficients.
Then the resulting object $\module{F}$ is a \emph{constructible persistence module of finitely generated abelian groups}.
However, an indecomposable persistence module of finitely generated abelian groups need not look anything like an interval 
persistence module.
For example, the module in Figure \ref{fig:ab} is indecomposable.
Indecomposables are hard to interpret especially under perturbations to the module.

We generalize the persistence diagram of Cohen-Steiner, Edelsbrunner, and Harer to the setting of constructible persistence 
modules $\module{F}$ valued in a symmetric monoidal category $\Ccat$ with images.
The category of sets, the category of vector spaces, and the category of finitely generated abelian groups
are examples of such categories.
We call this diagram the \emph{type $\Agroup$ persistence diagram} of $\module{F}$.
If $\Ccat$ is also abelian, then we define a second \emph{type $\Bgroup$ persistence diagram} of $\module{F}$.
The category of vector spaces and the category of abelian groups are examples of abelian categories.
The type $\Bgroup$ persistence diagram of $\module{F}$ may contain less information than the type $\Agroup$
persistence diagram of $\module{F}$.
However, the advantage of a type $\Bgroup$ diagram is a stronger statement of stability.
Depending on $\Ccat$, our persistence diagrams may not be a complete invariant of a persistence module.

Persistence is motivated by data analysis and data is noisy.
A small perturbation to a persistence module should not result in a drastic change to its
persistence diagram.
We use the standard \emph{interleaving distance} to measure differences between persistence modules \cite{proximity}.
We define a new metric we call \emph{erosion distance} to measure differences between persistence
diagrams.
In Theorem \ref{thm:two}, we show that if the interleaving distance between two constructible
persistence modules valued in an abelian category $\Ccat$ is $\ee$, then the erosion
distance between their type $\Bgroup$ persistence diagrams is at most $\ee$.
We call this \emph{continuity} of type $\Bgroup$ persistence diagrams.
If $\Ccat$ is simply a symmetric monoidal category, then Theorem \ref{thm:one} is a weaker one-way 
statement of continuity for type $\Agroup$ persistence diagrams.
We call this \emph{semicontinuity} of type $\Agroup$ persistence diagrams.
These theorems show that the information contained in both diagrams is stable to 
all sufficiently small perturbations of the module.

Cohen-Steiner, Edelsbrunner, and Harer define a stronger metric on the set of persistence diagrams 
they call \emph{bottleneck distance}.
They show that for two Morse functions $f ,g : \Mspace \to \Rspace$, the bottleneck distance between their persistence diagrams is at most $\max |f-g|$.
They do this by looking at the $1$-parameter family of persistence modules 
obtained from the linear interpolation $h : \Mspace \times [0,1] \to \Rspace$ taking $h_0 = f$ to $h_1 = g$.
Using the Box Lemma, which is a local statement of stability, 
they track each point in the persistence diagram of $h_0$ all the way to the persistence diagram of $h_1$.
Theorem \ref{thm:two} resembles the Box Lemma and assuming $\Ccat$ has colimits,
there is a way to construct a $1$-parameter $1$-Lipschitz family of persistence modules between any two interleaved persistence modules \cite{interpolation}.
This suggests that bottleneck stability might extend to type $\Bgroup$ persistence diagrams.
We leave the issue of bottleneck stability for future investigations.

\section{Persistence Modules}
Let $(\Ccat, \Box)$ be an essentially small symmetric monoidal category with images.
By essentially small, we mean that the collection of isomorphism classes of objects in $\Ccat$ is a set.
A symmetric monoidal category is, roughly speaking, a category $\Ccat$ with a binary operation $\Box$ on its objects
and an identity object $e \in \Ccat$ satisfying the following properties:
	\begin{itemize}
	\item (Symmetry) $a \Box b \cong b \Box a$, for all objects $a, b \in \Ccat$
	\item(Associativity) $a \Box (b \Box c) \cong (a \Box b) \Box c$, for all objects $a,b,c \in \Ccat$
	\item (Identity) $a \Box e \cong a$, for all objects $a \in \Ccat$.
	\end{itemize}
See \cite[page 114]{K_book} for a precise definition of a symmetric monoidal category.
By images, we mean that for every morphism $f : a \to b$,
there is a monomorphism $h : z \to b$ and a morphism $g : a \to z$ such that $f = h \circ g$.
Furthermore, for a monomorphim $h' : z' \to b$ and a morphism $g' : a \to z'$ such that $f = h' \circ g'$,
there is a unique morphism $u : z \to z'$ such that the following diagram commutes:
	\begin{equation*}
	\xymatrix{
	a \ar[rr]^f \ar[rd]^g \ar[rdd]_{g'} && b \\
	& z \ar[ur]^h \ar[d]^{u} & \\
	& z'. \ar[uur]_{h'} &
	}
	\end{equation*}
See \cite[page 12]{mitchell} for a discussion of images.

\begin{defn}
\label{defn:module}
A {\bf persistence module} is a functor $\module{F} : (\Rspace, \leq) \to \Ccat$
out of the poset of real numbers.
\end{defn}

Let $S = \{s_1 < \cdots < s_n \}$ be a finite set of real numbers.
Let $e \in \Ccat$ be an identity object.

\begin{defn}
A persistence module $\module{F}$ is $S$-{\bf constructible} if
	\begin{itemize}
	\item for $p \leq q < s_1$, $\module{F}(p \leq q)$ is the identity on $e$
	\item for $s_i \leq p \leq q < s_{i+1}$, $\module{F}(p \leq q)$ is an isomorphism
	\item for $s_n \leq p \leq q$, $\module{F}(p \leq q)$ is an isomorphism.
	\end{itemize}
We say $\module{F}$ is \emph{constructible} if there is a finite set $S$ such that $\module{F}$ is $S$-constructible.
Note that if $\module{F}$ is $S$-constructible and $T$-constructible, then it is also $(S \cup T)$-constructible.
\end{defn}

We draw examples from the following five essentially small symmetric monoidal categories with images.

\begin{ex}
Let $\Finset$ be the category of finite sets.
$\Finset$ is a symmetric monoidal category under finite colimits (disjoint unions).
A constructible persistence module valued in this category is often called a \emph{merge tree} \cite{morozov13}.
\end{ex}

The following four categories have more structure: they are abelian (see \cite[page 124]{K_book}) and 
Krull-Schmidt (see Appendix \ref{sec:krull_schmidt}).
In short, an abelian category is a category that behaves like the category of abelian groups.
Finite products and coproducts are the same.
Every morphism has a kernel and a cokernel.
Every monomorphism is the kernel of some morphism, and every epimorphism is the cokernel of some morphism.
The symmetric monoidal operation $\Box$ is the direct sum $\oplus$.

\begin{ex}
Let $\Vect$ be the category of finite dimensional $\field$-vector spaces, for some fixed field $\field$.
Each vector space $a \in \Vect$ is isomorphic to $\field_1 \oplus \field_2 \oplus \cdots \oplus \field_n$, where $n$ is the dimension of $a$.
Note that every short exact sequence $0 \to a \to b \to c \to 0$ splits. 
That is, $b \cong a \oplus c$. 
\end{ex}

\begin{ex}
Let $\Ab$ be the category of finitely generated abelian groups.
An indecomposable of $\Ab$ is isomorphic to the infinite cyclic group $\Zspace$ or to a primary cyclic group
$\sfrac{\Zspace}{p^m \Zspace}$, for a prime $p$ and a positive integer $m$. 
By the fundamental theorem of finitely generated abelian groups, each
object is uniquely isomorphic to 
$$
\Zspace^n \oplus \frac{\Zspace}{p_1^{m_1} \Zspace} \oplus \frac{\Zspace}{p_2^{m_2}\Zspace} \oplus \cdots \oplus \frac{\Zspace}{p_k^{m_k}\Zspace},
$$
for some $n \geq 0$ and primary cyclic groups $\sfrac{\Zspace}{ p_i^{m_i} \Zspace}$.
Not every short exact sequence in this category splits.
Consider the following short exact sequence
	\begin{equation*}
	\xymatrix{
	0 \ar[r] & \dfrac{\Zspace}{2 \Zspace} \ar[r]^{\times 2} &  \dfrac{\Zspace}{4 \Zspace} \ar[r]^{/ } & \dfrac{\Zspace}{2\Zspace} \ar[r] & 0.
	}
	\end{equation*}
Of course $\sfrac{\Zspace}{ 4 \Zspace}$ is not isomorphic to $\sfrac{\Zspace}{ 2 \Zspace} \oplus \sfrac{\Zspace}{ 2 \Zspace}$.
A finitely generated abelian group is simple iff it is isomorphic to $\sfrac{\Zspace}{ p \Zspace}$ for $p$ prime.
That is, $\sfrac{\Zspace}{ p \Zspace}$ has no subgroups other than $0$ and itself.
\end{ex}

\begin{ex}
Let $\Finab$ be the category of finite abelian groups.
An indecomposable of $\Finab$ is isomorphic to a primary cyclic group $\Zspace / p^m \Zspace$, for
prime $p$ and a positive integer $m$.
By the fundamental theorem of finitely generated abelian groups, each
object is uniquely isomorphic to 
$$
\frac{\Zspace}{p_1^{m_1} \Zspace} \oplus \frac{\Zspace}{p_2^{m_2}\Zspace} \oplus \cdots \oplus \frac{\Zspace}{p_k^{m_k}\Zspace}.
$$
As shown in the previous example, not every short exact sequence in this category splits.
\end{ex}

\begin{ex}
Let $\Rep(\Nspace)$ be the category of functors from the commutative monoid of natural numbers 
$\Nspace = \{0, 1, \dots\}$ to $\Vect$.
We think of $\Nspace$ as a category with a single object and an endomorphism for each $n \in \Nspace$
where $n \circ m$ is $n+m$.
A functor in $\Rep(\Nspace)$ is completely determined by where it sends $1$.
$\Rep(\Nspace)$ is therefore equivalent to the category whose objects are endomorphisms
$A : a \to a$ in $\Vect$ and
whose morphisms $f : A \to B$ are maps $\hat{f} : a \to b$
such that the following diagram commutes:
	\begin{equation*}
	\xymatrix{
	a \ar[d]_A \ar[r]^{\hat{f}}& b \ar[d]^B \\
	a \ar[r]_{\hat{f}} & b.
	}
	\end{equation*}

We represent each object of $\Rep(\Nspace)$ by a square matrix of elements in $\field$.
Suppose $\field$ is algebraically closed.
Then such a matrix decomposes into a Jordan normal form
\begin{equation*}
\begin{pmatrix}
      J_1 &  &            \\
      & \ddots &   \\
      & &  J_n
\end{pmatrix}
\end{equation*}
where each Jordan block is of the form
\begin{equation*}
J_i = \begin{pmatrix}
      \lambda_i & 1 & &           \\
      & \lambda_i &  \ddots & \\
      & & \ddots & 1 \\
      & &   & \lambda_i
\end{pmatrix}.
\end{equation*}
The indecomposables of $\Rep(\Nspace)$ are Jordan blocks.
An object of $\Rep(\Nspace)$ is simple iff its a Jordan block of dimension one.

Not every short exact sequence in $\Rep(\Nspace)$ splits.
Let $A : \field \to \field$ be given by $(\lambda)$,
let $B : k^2 \to k^2$ be given by
$	\begin{pmatrix}
      	\lambda & 1  \\
     	 0 & \lambda
	\end{pmatrix}
$,
and let $f : A \to B$ be given by $\hat{f}(x) = (x,0)$.
The quotient $C = B / \image f$ is isomorphic to $A$.
This gives us a short exact sequence
	\begin{equation*}
	\xymatrix{
	0 \ar[r] & A \ar[r]^{f} &  B \ar[r]^{/ } & C \ar[r] & 0
	}
	\end{equation*}
that does not split because $B$ is not isomorphic to $(\lambda) \oplus (\lambda) = 
\begin{pmatrix}
      	\lambda & 0  \\
     	 0 & \lambda
	\end{pmatrix}$.
\end{ex}

Let $\Pmod(\Ccat)$ be the full subcategory of the functor category $\big[ (\Rspace, \leq), \Ccat \big]$ consisting of
constructible persistence modules.
Henceforth, all persistence modules are constructible.

\section{Interleaving Distance}
There is a natural distance between persistence modules.
For $\ee \in \Rspace$, let 
$$\mathsf{Shift}^\ee : (\Rspace, \leq) \to (\Rspace, \leq)$$
be the poset map that sends $r$ to $r+\ee$.
If $\module{F} \in \Pmod$ is $S$-constructible, then 
$\module{F} \circ \mathsf{Shift}^\ee$ is $(S+\ee)$-constructible. 
Thus $\mathsf{Shift}^\ee$ gives rise to a functor 
$$\Delta^\ee : \Pmod(\Ccat) \to \Pmod(\Ccat).$$
For each $\ee \geq 0$, there is a canonical morphism $\sigma^\ee_\module{F} : \module{F} \to \Delta^\ee(\module{F})$
given by $\sigma^\ee_\module{F}(r) = \module{F}(r \leq r+ \ee)$.

\begin{defn}
Two modules $\module{F}, \module{G} \in \Pmod(\Ccat)$ are $\ee$-\define{interleaved} if there are morphisms
$\phi : \module{F} \to \Delta^\ee(\module{G})$ and $\psi : \module{G} \to \Delta^\ee(\module{F})$ such that
$\sigma^{2\ee}_\module{F} = \Delta^\ee(\psi) \circ \phi$ and $\sigma^{2\ee}_\module{G} = \Delta^\ee(\phi)  \circ \psi$.
\end{defn}

Any two persistence modules $\module{F}$ an $\module{G}$ 
are constructible with respect to a common set $T = \{ t_1 < \cdots < t_m \}$.
Both $\module{F}$ and $\module{G}$ are therefore constant over the half-open intervals $[t_i, t_{i+1})$
and $[t_m, \infty)$.
As a consequence, if there is an interleaving between $\module{F}$ and $\module{G}$,
then there is a minimum interleaving between $\module{F}$ and $\module{G}$.

\begin{defn}
The {\bf interleaving distance} $\Dist_I(\module{F}, \module{G})$ between two persistence modules is the minimum
over all $\ee \geq 0$ such that $\module{F}$ and $\module{G}$ are $\ee$-interleaved.
If $\module{F}$ and $\module{G}$ are not interleaved, let $\Dist_I(\module{F}, \module{G}) = \infty$.
\end{defn}

\begin{ex}
Let $f : \Mspace \to \Rspace$ be a Morse function on a compact manifold $\Mspace$.
The function $f$ filters $\Mspace$ by sublevel sets $\Mspace_{f \leq r}$.
Apply homology with coefficients in $\field$ and the resulting object is in $\Pmod(\Vect)$.
Apply homology with integer coefficients and the resulting object is in $\Pmod(\Ab)$.
Apply homology with coefficients in a finite abelian group $G$ and the resulting object is in
$\Pmod(\Finab)$.
Suppose $\ee > |f-g|$.
Then $\Mspace_{f \leq r} \subseteq \Mspace_{g \leq r+\ee} \subseteq \Mspace_{f \leq r + 2 \ee}$
implying, by functoriality of homology, an $\ee$-interleaving between the two persistence modules.
\end{ex}

\begin{rmk}
The idea of interleavings appears in \cite{CSEdH} but it is not named until \cite{proximity}.
Since then, interleavings have been abstracted to other settings \cite{morozov13, BubScott, Curry2014, Bubenik2015,Les2015, deSilvaMunchPatel}.
\end{rmk}

\section{Persistence Diagrams}
We now generalize the persistence diagram of Cohen-Steiner, Edelsbrunner, and Harer.

\begin{defn}
Define $(\Dgm, \supseteq)$ as the poset of all half-open intervals $[q,r) \subset \Rspace$, for $q < r$, and all
half-infinite intervals $[q, \infty) \subset \Rspace$.
The poset relation is the containment relation.
\end{defn}

Let $S = \{ s_1 < \cdots < s_n \}$ be a finite set of real numbers and $\Ggroup$ an abelian group.
In the setting of Cohen-Steiner, Edelsbrunner, and Harer, the group $\Ggroup$ is the integers.

\begin{defn}
A map $X : \Dgm \to \Ggroup$ is \define{$S$-constructible} if for every $J \supseteq I$ such that $J \cap S = I \cap S$, $X(I) = X(J)$.
\end{defn}

We say a map $X : \Dgm \to \Ggroup$ is \emph{constructible} if it is $S$-constructible for some set $S$.
In the setting of Cohen-Steiner, Edelsbrunner, and Harer, $X$ is the rank function.

\begin{defn}
A map $Y : \Dgm \to \Ggroup$ is \define{$S$-finite} if $Y(I) \neq e$
implies $I = [s_i,s_j)$ or $I = [s_i,\infty)$.
\end{defn}

We say a map $Y : \Dgm \to \Ggroup$ is \emph{finite} if it is $T$-finite for some set $T$.

\begin{defn}
A \define{persistence diagram} is a finite map $Y : \Dgm \to \Ggroup$.
\end{defn}

We visualize the poset $\Dgm$ as the set of points in the extended plane 
$\Rspace \times \Rspace \cup \{\infty\}$ above the diagonal.
We visualize a persistence diagram $Y$ by marking each $I \in \Dgm$ for which $Y(I) \neq [e]$
with the group element $Y(I)$.
See Figures \ref{fig:finset}, \ref{fig:vect}, \ref{fig:ab}, \ref{fig:finab}, and \ref{fig:repn}.

In order to define a morphism between persistence diagrams, we require more structure on the abelian
group $\Ggroup$.
Let $(\Ggroup, \preceq)$ be an abelian group with a translation invariant partial ordering on its elements. 
That is if $a \preceq b$, then $a+c \preceq b+c$ for any $c \in \Ggroup$.
Let $e \in \Ggroup$ be the additive identity.

\begin{defn}
A \define{morphism} $Y_1 \to Y_2$ of persistence diagrams is the relation
	$$\sum_{J \in \Dgm: J \supseteq I} Y_1(J) \preceq \sum_{J \in \Dgm: J \supseteq I} Y_2(J),$$
for each $I \in \Dgm$ such that $Y_1(I) \neq e$.
\end{defn}

Let $\PDgm(\Ggroup)$ be the poset of persistence diagrams valued in $(\Ggroup, \preceq)$.

\begin{thm}[M\"obius Inversion Formula]
For any $S$-constructible map $X : \Dgm \to \Ggroup$,
there is an $S$-finite map $Y : \Dgm \to \Ggroup$ satisfying the M\"obius inversion formula
	$$X(I) = \sum_{J \in \Dgm: J \supseteq I} Y(J),$$
for each $I \in \Dgm$.
\end{thm}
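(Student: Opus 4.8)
The plan is \emph{not} to define a M\"obius function on the (non-locally-finite) poset $\Dgm$ directly, but instead to use $S$-constructibility to replace $\Dgm$ by a finite poset $\Dgm_S$ on which the M\"obius inversion is the elementary two-dimensional inclusion--exclusion; one then checks that the resulting $Y$ is $S$-finite and that the inversion identity, once known on $\Dgm_S$, automatically propagates to all of $\Dgm$.

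Concretely, set $s_{n+1} := \infty$ and let $\Dgm_S \subseteq \Dgm$ be the finite subposet of intervals $[s_i,s_j)$ with $1 \le i < j \le n+1$ (the case $j = n+1$ being $[s_i,\infty)$), ordered by containment. The two facts about $S$-constructibility that make $\Dgm_S$ a faithful model are: (i) every $I \in \Dgm$ meeting $S$ has a canonical representative $\widehat I \in \Dgm_S$ --- slide the left endpoint up to the least element of $S$ inside $I$ and the right endpoint up to the least element of $S \cup \{\infty\}$ that is $\ge \sup I$ --- and $X(I) = X(\widehat I)$, while every $I$ lying below $s_1$ has $X(I) = e$; and (ii) for such an $I$, a member of $\Dgm_S$ contains $I$ if and only if it contains $\widehat I$, so the order of $\Dgm_S$ above $I$ coincides with that above $\widehat I$. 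Both are obtained by a finite case analysis on where the endpoints of $I$ sit relative to $S$ (in $S$, strictly inside a gap, above $s_n$, or $=\infty$), sliding endpoints one block at a time and invoking the defining relation of $S$-constructibility at each step.

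Under the bijection $[s_i,s_j) \leftrightarrow (i,j)$ the poset $(\Dgm_S,\supseteq)$ is a subposet of the product of a chain with an opposite chain, so its M\"obius inversion is explicit: define $Y : \Dgm \to \Ggroup$ by $Y(I) = e$ for $I \notin \Dgm_S$ and
\[
Y\big([s_i,s_j)\big) \;=\; X\big([s_i,s_j)\big) - X\big([s_{i-1},s_j)\big) - X\big([s_i,s_{j+1})\big) + X\big([s_{i-1},s_{j+1})\big),
\]
with the convention that $X$ of an interval whose left index is $0$, or whose right index exceeds $n+1$, equals $e$. This $Y$ is $S$-finite by construction, and a telescoping computation (two nested inclusion--exclusions) gives, for every $[s_a,s_c) \in \Dgm_S$,
\[
\sum_{J \in \Dgm_S :\, J \supseteq [s_a,s_c)} Y(J) \;=\; \sum_{i \le a}\ \sum_{j \ge c} Y\big([s_i,s_j)\big) \;=\; X\big([s_a,s_c)\big),
\]
which is the M\"obius inversion formula on $\Dgm_S$ (one may equally just quote the classical finite-poset M\"obius inversion here).

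Finally, for arbitrary $I \in \Dgm$: if $I$ lies below $s_1$ then no member of $\Dgm_S$ contains $I$, so both sides of the claimed identity are the empty sum $e = X(I)$; otherwise, using that $Y$ is supported on $\Dgm_S$, then fact (ii), then the identity on $\Dgm_S$, then fact (i),
\begin{align*}
\sum_{J \in \Dgm :\, J \supseteq I} Y(J)
&= \sum_{J \in \Dgm_S :\, J \supseteq I} Y(J)
= \sum_{J \in \Dgm_S :\, J \supseteq \widehat I} Y(J) \\
&= X(\widehat I) = X(I),
\end{align*}
so $Y$ is the required $S$-finite map. The one genuinely substantive point is the pair of facts (i)--(ii): pinning down exactly how $S$-constructibility transports an arbitrary interval onto its representative in $\Dgm_S$ while simultaneously controlling which members of $\Dgm_S$ lie above it. Everything after that is routine finite M\"obius calculus --- here literally a double telescoping sum --- and it is worth noting that because $\Ggroup$ is an abelian group the inversion is unconditional: the translation-invariant order $\preceq$ on $\Ggroup$ plays no role in this statement (it enters only later, for morphisms of persistence diagrams).
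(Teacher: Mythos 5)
Your construction of $Y$ and the double telescoping over the finite grid are essentially the paper's own argument (the paper writes the same four-term difference, using auxiliary values $s_0<s_1$ and $s_{n+1}>s_n$ where you set out-of-range terms to $e$; your convention even makes the telescoping unconditional). The genuine gap is in the propagation step, specifically your fact (ii), which is false as stated: sliding the left endpoint of $I$ \emph{up} to the least element of $S$ inside $I$ enlarges the up-set in $\Dgm_S$. Take $S=\{1,2,3\}$ and $I=[1.5,2.5)$: your recipe gives $\widehat{I}=[2,3)$, but the members of $\Dgm_S$ containing $I$ are exactly $[1,3)$ and $[1,\infty)$, while those containing $\widehat{I}$ also include $[2,3)$ and $[2,\infty)$. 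So the equality $\sum_{J\supseteq I}Y(J)=\sum_{J\supseteq\widehat{I}}Y(J)$ breaks, and not harmlessly: for $X=d\module{F}_\Agroup$ with $\module{F}$ the interval module that is $\field$ on $[2,\infty)$ and $0$ before (an $S$-constructible module), the true sum is $Y([1,3))+Y([1,\infty))=X([1,3))=e$, which agrees with $X(I)=e$, whereas your chain returns $X([2,3))=[\field]\neq e$; in particular your fact (i) equality $X(I)=X(\widehat{I})$ also fails for this $X$. The representative that actually controls the up-set slides the left endpoint \emph{down} to the largest $s_h\leq\inf I$ (keeping, as you do, the right endpoint at the least element of $S\cup\{\infty\}$ that is $\geq\sup I$): for grid intervals $J$, $J\supseteq I$ iff $J\supseteq\widehat{I}$ is then immediate, and $X(I)=X(\widehat{I})$ is precisely the statement that $X$ is unchanged when either endpoint moves within its block determined by $S$, which is what $d\module{F}$ satisfies by construction (this is the role of the $-\dd$ in its definition). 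Note also that your trichotomy is not exhaustive: intervals disjoint from $S$ lying inside a gap $(s_i,s_{i+1})$ or above $s_n$ neither meet $S$ nor lie below $s_1$, so your $\widehat{I}$ is undefined for them; the corrected recipe covers them uniformly.

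A related caveat: the claim that every $I$ below $s_1$ has $X(I)=e$ is indeed what is needed (no element of the support contains such an $I$, so the sum is empty), but it is not a consequence of the paper's literal definition of $S$-constructibility, which only forces $X(I)=X(J)$ along containments with equal trace; a constant nonzero map is $S$-constructible in that sense yet admits no $S$-finite inverse. To be fair, this wrinkle is inherited from the paper: its own final case asserts the existence of an $I'$ of the form $[s_i,s_j)$ or $[s_i,\infty)$ with $I'\cap S=I\cap S$, which likewise fails for empty-trace intervals and picks the same wrong (trace-based rather than block-based) representative. But since your write-up makes the reduction explicit, this is where your argument, as written, fails; with $\widehat{I}$ corrected as above and the hypothesis on $X$ stated as block-constancy of both endpoints (the property $d\module{F}$ actually has), the rest of your proof goes through unchanged.
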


\begin{proof}
Let $S = \{ s_1 < \cdots < s_n \}$.
Define
	\begin{align}
	\label{eq:square} Y\big([s_i,s_j)\big) &= X\big([s_i,s_j)\big) - X\big([s_i,s_{j+1})\big) + 
	X\big([s_{i-1},s_{j+1})\big) - 
	X\big([s_{i-1},s_j)\big) \\
	\label{eq:halfsquare} Y\big([s_i,\infty)\big) &= X\big([s_i,\infty)\big) - X\big([s_{i-1},\infty)\big).
	\end{align}
Here we interpret $s_0$ as any value less than $s_1$ and $s_{n+1}$ as any value greater than $s_{n}$.
Define $Y(I) = e$ for all other $I \in \Dgm$.
Let us check that $Y$ satisfies the M\"obius inversion formula.
Fix an interval $I \in \Dgm$. 
Suppose $I = [s_i,s_j)$.
We have
	\begin{align*}
	\sum_{J \in \Dgm : J \supseteq I} Y(J) &=  \sum_{k = j}^{n} \sum_{h=1}^{i} Y\big([s_h,s_k)\big) + \sum_{h=1}^{i} Y\big([s_h,\infty)\big) \\
	&= \sum_{k = j}^{n} \sum_{h=1}^{i} \Big[ X\big([s_h,s_k)\big) - X\big([s_h,s_{k+1})\big) + X\big([s_{h-1},s_{k+1})\big) - 
	X\big([s_{h-1},s_k)\big) \Big] \\
	&+ \sum_{h=1}^{i} \Big[ X\big([s_h,\infty)\big) - X\big([s_{h-1},\infty)\big) \Big] \\
	&= \sum_{k=j}^{n} \Big[ X\big([s_i,s_k)\big) - X\big([s_i, s_{k+1})\big) \Big] + X\big([s_i,\infty)\big) \\
	&= X\big([s_i,s_j)\big).
	\end{align*}
Suppose $I$ is of the form $[s_i,\infty)$.
We have
	\begin{align*}
	\sum_{J \in \Dgm : J \supseteq I} Y(J) &=  \sum_{h=1}^{i} Y \big( [s_h,\infty) \big) \\
	&= \sum_{h=1}^{i} \big[ X\big([s_h,\infty)\big) - X\big([s_{h-1},\infty)\big) \big] \\
	&= X\big([s_i,\infty)\big).
	\end{align*}
Suppose $I$ is not of the form $[s_i,s_j)$.
Then there is an $I' \in \Dgm$ of the form $[s_i,s_j)$ or $[s_i, \infty)$ such that $I' \cap S = I \cap S$.
We have
	$$\sum_{J \in \Dgm : J \supseteq I} Y(J) = \sum_{J \in \Dgm: J \supseteq I'} Y(J) = X\big(I'\big) = X(I).$$
\end{proof}

The persistence diagram $Y$ of Cohen-Steiner, Edelsbrunner, and Harer is the 
M\"obius inversion of the rank function $X$.

\begin{rmk}
The M\"obius inversion formula applies to any constructible map from a poset to an abelian group.
See \cite{rota64,BenderGoldman75,leinster2012}.
This suggests a notion of a persistence diagram for constructible persistence modules not just over $(\Rspace, \leq)$
but over more general posets.
See \cite{BubScott,Bubenik2015}.
\end{rmk}

\section{Erosion Distance}
The interleaving distance suggests a natural metric between persistence diagrams.
For $\ee \geq 0$, let
	$$\Grow^\ee : \Dgm \to \Dgm$$
be the poset map that sends each $[p,q)$ to $[p-\ee,q+\ee)$ and each $[p,\infty)$ to $[p-\ee, \infty)$.
For a morphism $Y_1 \to Y_2$ in $\PDgm(\Ggroup)$, we have $Y_1 \circ \Grow^\ee \to Y_2 \circ \Grow^\ee$.
Thus $\Grow^\ee$ gives rise to a functor
	$$\nabla^\ee : \PDgm(\Ggroup) \to \PDgm(\Ggroup)$$
given by precomposition with $\Grow^\ee$.
For each $\ee \geq 0$, we have $\nabla^\ee(Y) \to Y$.
The persistence diagram $\nabla^\ee(Y)$ is visualized as the persistence diagram $Y$
with all its points shifted towards the diagonal by a distance $\sqrt{2} \ee$.
See Figure \ref{fig:erode}.
	\begin{figure}
	\centering
	\includegraphics[scale=0.75]{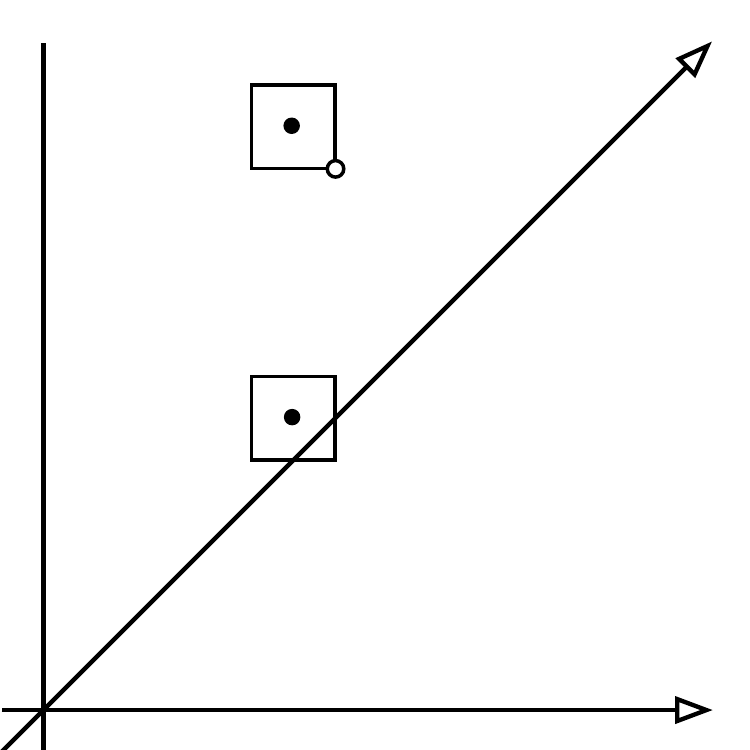} 
	\caption{The $\ee$-erosion $\nabla^\ee(Y)$ (circle) of a persistence diagram $Y$ (dots) slides each point of $Y$ 
	to the lower-right corner of the square of side length $2\ee$ centered at that point.
	Points close to the diagonal disappear into the diagonal.
	Note that $\nabla^\ee(Y) \to Y$.}
	\label{fig:erode}
	\end{figure}

\begin{defn}
An \define{$\ee$-erosion} between two persistence diagrams $Y_1, Y_2, \in \PDgm(\Ggroup)$ 
is a pair of morphisms $\nabla^\ee(Y_2) \to Y_1$ and $\nabla^\ee(Y_1) \to Y_2$.
\end{defn}

Any two persistence diagrams are finite with respect to a common set $T = \{t_1 < \cdots < t_n\}$.
As a consequence, if there is an $\ee$-erosion between $Y_1$ and $Y_2$, then there is a minimum $\ee$
for which there is an $\ee$-erosion.

\begin{defn}
The \define{erosion distance} $\Dist_E(Y_1, Y_2)$ is the minimum over all $\ee \geq 0$ such that
there is an $\ee$-erosion between $Y_1$ and $Y_2$.
If there is no $\ee$-erosion, let $\Dist_E(Y_1,Y_2) = \infty$.
\end{defn}

\begin{prop}
Let $X : \Dgm \to \Ggroup$ be a constructible map and let $Y : \Dgm \to \Ggroup$ be a finite map
that satisfies the M\"obius inversion formula
	$$X(I) = \sum_{J \in \Dgm: J \supseteq I} Y(J),$$
for each $I \in \Dgm$.
Then $$X \circ \Grow^\ee(I) = \sum_{J \in \Dgm: J \supseteq I} \nabla^\ee (Y)(J),$$
for each $I \in \Dgm$. 
In other words, $\Grow^\ee$ commutes with the M\"obius inversion formula.
\end{prop}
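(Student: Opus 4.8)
The plan is to show that $\Grow^\ee$ restricts to an order isomorphism from the up-set $\{J \in \Dgm : J \supseteq I\}$ onto the up-set $\{K \in \Dgm : K \supseteq \Grow^\ee(I)\}$, and then to reindex the M\"obius sum along this bijection. First I would record a partial inverse: for $\ee \geq 0$ let $\Grow^{-\ee}$ send $[p,q)$ with $q - p > 2\ee$ to $[p+\ee,q-\ee)$ and $[p,\infty)$ to $[p+\ee,\infty)$, leaving it undefined on finite intervals of width $\leq 2\ee$. A direct endpoint check shows $\Grow^{-\ee}$ is a poset map with respect to $\supseteq$ wherever it is defined, that $\Grow^{-\ee} \circ \Grow^\ee = \id$ on all of $\Dgm$, and that $\Grow^\ee \circ \Grow^{-\ee} = \id$ on the domain of $\Grow^{-\ee}$.

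The key observation is that for any $I \in \Dgm$, every $K \supseteq \Grow^\ee(I)$ already lies in the domain of $\Grow^{-\ee}$. Indeed, if $I = [c,d)$ then $\Grow^\ee(I) = [c-\ee, d+\ee)$ has width $d - c + 2\ee > 2\ee$, so any finite $K$ containing it has width $> 2\ee$, while any half-infinite $K$ is automatically in the domain; and if $I = [c,\infty)$ then $\Grow^\ee(I) = [c-\ee,\infty)$, so any $K$ containing it is half-infinite. Consequently $K \mapsto \Grow^{-\ee}(K)$ sends $\{K \supseteq \Grow^\ee(I)\}$ into $\{J \supseteq \Grow^{-\ee}(\Grow^\ee(I))\} = \{J \supseteq I\}$; combined with the poset map $\Grow^\ee \colon \{J \supseteq I\} \to \{K \supseteq \Grow^\ee(I)\}$ and the two identities above, this exhibits the desired bijection between the two up-sets.

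Finally, using $\nabla^\ee(Y) = Y \circ \Grow^\ee$ and reindexing by $K = \Grow^\ee(J)$ along this bijection (injectivity of $\Grow^\ee$ ensures no term is counted twice), we obtain
$$\sum_{J \in \Dgm : J \supseteq I} \nabla^\ee(Y)(J) \;=\; \sum_{J \in \Dgm : J \supseteq I} Y\big(\Grow^\ee(J)\big) \;=\; \sum_{K \in \Dgm : K \supseteq \Grow^\ee(I)} Y(K) \;=\; X\big(\Grow^\ee(I)\big),$$
where the last equality is the M\"obius inversion formula applied to the interval $\Grow^\ee(I)$. All sums are finite, since $Y$ — and hence $\nabla^\ee(Y)$, which is nonzero only on images under $\Grow^{-\ee}$ of the finitely many intervals where $Y$ is nonzero — is a finite map, so there is no convergence issue. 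The one step I would be careful to nail down is the "width $> 2\ee$" observation: it is exactly what guarantees that the up-set of $\Grow^\ee(I)$ is no larger than the image under $\Grow^\ee$ of the up-set of $I$, so that the reindexing captures every term of $X(\Grow^\ee(I))$. Everything else is routine bookkeeping with interval endpoints.
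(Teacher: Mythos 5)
Your proposal is correct and follows essentially the same route as the paper: rewrite $\nabla^\ee(Y)(J)$ as $Y\circ\Grow^\ee(J)$ and then invoke the M\"obius inversion hypothesis at the interval $\Grow^\ee(I)$. The only difference is that you spell out the up-set bijection $\{J \supseteq I\} \leftrightarrow \{K \supseteq \Grow^\ee(I)\}$ (via the width $> 2\ee$ observation) that the paper's two-line computation leaves implicit, which is a worthwhile but routine verification.
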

\begin{proof}
We have
	\begin{equation*}
	\begin{split}
	\sum_{J \in \Dgm: J \supseteq I} \nabla^\ee (Y)(J) &=
	\sum_{J \in \Dgm: J \supseteq I} Y \circ \Grow^\ee(J) \\
	&= X \circ \Grow^\ee(I)
	\end{split}
	\end{equation*}
\end{proof}

\begin{rmk}
The erosion distance first appears in \cite{Edelsbrunner2011} which is an early attempt to develop
a theory of persistence for maps from a surface to the Euclidean plane.
\end{rmk}

\section{Grothendieck Groups}

We are interested in two abelian groups: the Grothendieck group $\Agroup$ of an essentially small
symmetric monoidal category and the Grothendieck group $\Bgroup$ of an essentially small abelian category.
See \cite{K_book} for an introduction to the two Grothendieck groups.
Note that every abelian category is a symmetric monoidal category under the direct sum $\oplus$
and the additivity identity is the zero object.

\subsection{Symmetric Monoidal Category}
Let $\Ccat$ be an essentially small monoidal category.
The set $\Igroup(\Ccat)$ of isomorphism classes in $\Ccat$ is a commutative monoid under $\Box$.
We write the isomorphism class of an object $a \in \Ccat$ as $[a] \in \Igroup(\Ccat)$,
the binary operation in $\Igroup(\Ccat)$ as $[a] + [b] = [a \Box b]$, and the additive identity of $\Igroup(\Ccat)$ as $[e]$.

\begin{defn_two}
The \define{Grothendieck group $\Agroup(\Ccat)$} of $\Ccat$
is the group completion of the commutative monoid $\Igroup(\Ccat)$.
\end{defn_two}

Explicitly, an element of $\Agroup(\Ccat)$ is of the form $[a] - [b]$ with addition coordinatewise, and $[a] = [c]$ iff $[a] + [d] = [c] + [d]$,
for some element $[d] \in \Igroup(\Ccat)$.
If $\Ccat$ is additive and Krull-Schmidt (see Appendix \ref{sec:krull_schmidt}), then 
each object in $\Ccat$ is isomorphic to a unique direct sum of indecomposables.
This means $\Agroup(\Ccat)$ is the free abelian group generated by the
set of isomorphism classes of indecomposables.
The Grothendieck group $\Agroup(\Ccat)$ has a natural translation-invariant partial ordering.
We define $[a] \preceq [b]$ iff $[b]-[a] \in \Igroup(\Ccat)$.
If $[a] \preceq [b]$, then $[a]+[c] \preceq [b]+[c]$ for any $[c] \in \Agroup(\Ccat)$.
See \cite[page 72]{K_book} for an introduction to translation-invariant partial orderings on Grothendieck groups.

\begin{ex_two}
Every finite set is a finite disjoint union of the singleton set.
We have $$\Agroup(\Finset) \cong \Zspace.$$
\end{ex_two}

\begin{ex_two}
Every finite dimensional vector space is isomorphic to a finite direct sum of $\field$.
We have $$\Agroup(\Vect) \cong \Zspace.$$
\end{ex_two}

\begin{ex_two}
An indecomposable of $\Ab$ is the free cyclic group or a primary cyclic group.
We have $$\Agroup(\Ab) \cong \Zspace \oplus \bigoplus_{(m,p)} \Zspace,$$ 
over all primes $p$ and positive integers $m$.
\end{ex_two}

\begin{ex_two}
An indecomposable of $\Finab$ is a primary cyclic group.
We have
	$$\Agroup(\Finab) \cong \bigoplus_{(m,p)} \Zspace$$
over all primes $p$ and positive integers $m$.
\end{ex_two}

\begin{ex_two}
An indecomposable of $\Rep(\Nspace)$ is a Jordan block.
We have $$\Agroup \big( \Rep(\Nspace) \big) \cong \bigoplus_{(m,\lambda)} \Zspace,$$
over all positive integers $m$ and elements $\lambda$ in the field $\field$.
\end{ex_two}

\subsection{Abelian Category}
Suppose $\Ccat$ is an essentially small abelian category.
We say two elements $[b]$ and $[a] + [c]$ in $\Agroup(\Ccat)$ are related, written $[b] \sim [a] + [c]$,
if there is a short exact sequence $0 \to a \to b \to c \to 0$.

\begin{defn_two}
The \define{Grothendieck group $\Bgroup(\Ccat)$} of $\Ccat$  
is the quotient group $\Agroup(\Ccat) / \sim$.
That is, $\Bgroup(\Ccat)$ is the abelian group with one generator for each isomorphism classes $[a]$ in $\Ccat$
and one relation $[b] \sim [a] + [c]$ for each short exact sequence $0 \to a \to b \to c \to 0$.
\end{defn_two}

Let $\pi : \Agroup(\Ccat) \to \Bgroup(\Ccat)$ be the quotient map.
Note that $\pi \big( \Igroup(\Ccat) \big)$ is a commutative monoid that generates $\Bgroup(\Ccat)$.
This allows us to define a translation-invariant partial ordering on $\Bgroup(\Ccat)$ as follows.
We define $[a] \preceq [b]$ iff $[b] - [a] \in \pi \big( \Igroup(\Ccat) \big)$.
If $[a] \preceq [b]$, then $[a] + [c] \preceq [b] + [c]$ for any $[c] \in \Bgroup(\Ccat)$.
The quotient map $\pi$ is a poset map.

\begin{ex_two}
Every short exact sequence in $\Vect$ splits.
We have $$\Bgroup(\Vect) \cong \Zspace.$$
The quotient map $\pi : \Agroup(\Vect) \to \Bgroup(\Vect)$ is the identity.
\end{ex_two}

\begin{ex_two}
Every primary cyclic group $\sfrac{\Zspace}{p^m \Zspace}$ fits into a short exact sequence
	$$0 \to \Zspace \to \Zspace \to \frac{\Zspace}{p^m \Zspace}  \to 0.$$
This means $[\Zspace] \sim [\Zspace] + \big[\frac{\Zspace}{p^m \Zspace} \big]$
and therefore $0 \sim \big[\frac{\Zspace}{p^m \Zspace}\big]$.
We have $$\Bgroup(\Ab) \cong \Zspace.$$
The quotient map $\pi : \Agroup(\Ab) \to \Bgroup(\Ab)$ forgets the torsion part of every finitely generated abelian group.
\end{ex_two}

\begin{ex_two}
Every primary cyclic group $\sfrac{\Zspace}{p^m \Zspace}$ fits into a short exact sequence
	$$0 \to \frac{\Zspace}{p \Zspace} \to \frac{\Zspace}{p^m \Zspace} \to \frac{\Zspace}{p^{m-1} \Zspace} \to 0.$$ 
This means $$\left[ \frac{\Zspace}{p^m \Zspace} \right] \sim m \left[ \frac{\Zspace}{p \Zspace} \right].$$
Furthermore, $\frac{\Zspace}{p \Zspace}$ is a simple object so it can not be broken by a short exact sequence.
We have $$\Bgroup(\Finab) \cong \bigoplus_{p} \Zspace$$
over all $p$ prime.
The quotient map $\pi : \Agroup(\Finab) \to \Bgroup(\Finab)$ takes each primary cyclic group $\left[ \frac{\Zspace}{p^m \Zspace} \right]$ to
$m$ in the $p$ factor of $\Bgroup(\Finab)$.
\end{ex_two}

\begin{ex_two}
Every Jordan block fits into a short exact sequence.
For example,
	$$0 \to
	(\lambda) \to  
	\begin{pmatrix}
      	\lambda & 1  & 0 \\
     	 0 & \lambda & 1 \\
	 0 & 0 & \lambda
	\end{pmatrix}
	\to 
	\begin{pmatrix}
      	\lambda & 1  \\
     	 0 & \lambda
	\end{pmatrix} \to 0$$
	and
	$$0 \to (\lambda) \to  
	\begin{pmatrix}
      	\lambda & 1  \\
     	 0 & \lambda
	\end{pmatrix}
	\to (\lambda) \to 0.$$
This means $$\begin{pmatrix}
      	\lambda & 1  & 0 \\
     	 0 & \lambda & 1 \\
	 0 & 0 & \lambda
	\end{pmatrix}
	\sim 3 (\lambda).$$
Futhermore, each one-dimensional Jordan block $(\lambda)$ is simple so it can not be broken
by a short exact sequence.
We have 
	$$\Bgroup \big( \Rep(\Nspace) \big) \cong \bigoplus_{\lambda \in \field} \Zspace.$$
The quotient map $\pi : \Agroup\big(\Rep(\Nspace)\big) \to \Bgroup\big(\Rep(\Nspace)\big)$ 
takes each Jordan block of dimension $m \in \Nspace$
with eigenvalue $\lambda \in \field$ to $m$ in the $\lambda$ factor of $\Bgroup \big( \Rep(\Nspace) \big)$.
\end{ex_two}

\section{Diagram of a Module}
Fix an essentially small symmetric monoidal category $\Ccat$ with images.
We now assign to each persistence module $\module{F} \in \Pmod(\Ccat)$ a persistence diagram 
$\module{F}_\Agroup \in \PDgm \big( \Agroup(\Ccat) \big )$.
If $\Ccat$ is also abelian, then we assign to $\module{F}$ a second persistence diagram 
$\module{F}_\Bgroup \in \PDgm \big( \Bgroup(\Ccat) \big )$.

We start by constructing a map
	$$d \module{F}_\Igroup : \Dgm \to \Igroup(\Ccat).$$
Recall $\Igroup(\Ccat)$ is the commutative monoid of isomorphism classes of objects in $\Ccat$.
Suppose $\module{F}$ is $S = \{ s_1 < \cdots < s_n \}$-constructible.
Then there is a $\delta > 0$ such that $s_{i-1} < s_i - \delta$, for each $1 < i \leq n$.
Choose a value $s' > s_n$.
Define
	\begin{equation*}
	d \module{F}_{\Igroup}(I) = 
	\begin{cases}
	\big[ \image\; \module{F}(p < s_i - \dd) \big] & \text{for}\;  I = [p, s_i) \\
	\big[ \image\; \module{F}(p < s') \big] & \text{for}\; I = [p, \infty) \\
	\big[ \image\; \module{F}(p < q) \big] & \text{for all other}\; I = [p,q).
	\end{cases}
	\end{equation*}
Note that if $\module{F}$ is also $T$-constructible, then $d \module{F}_\Igroup$ 
constructed using $T$ is the same as $d \module{F}_\Igroup$ constructed using $S$.
Now compose with the inclusion map $\Igroup(\Ccat) \mono \Agroup(\Ccat)$ and we have an $S$-constructible
map 
$$d\module{F}_\Agroup : \Dgm \to \Agroup(\Ccat).$$
Suppose $\Ccat$ is abelian.
Then by composing with the quotient map $\pi : \Agroup(\Ccat) \to \Bgroup(\Ccat)$, we have an $S$-constructible
map
$$d\module{F}_\Bgroup : \Dgm \to \Bgroup(\Ccat).$$

\begin{defn}
The \define{type $\Agroup$ persistence diagram} of $\module{F}$ is the M\"obius inversion 
	$$\module{F}_\Agroup : \Dgm \to \Agroup(\Ccat)$$
of $d\module{F}_\Agroup : \Dgm \to \Agroup(\Ccat)$.
\end{defn}

\begin{defn}
The \define{type $\Bgroup$ persistence diagram} of $\module{F}$ is the M\"obius inversion 
	$$\module{F}_\Bgroup : \Dgm \to \Bgroup(\Ccat)$$
of $d\module{F}_\Bgroup : \Dgm \to \Bgroup(\Ccat)$.
\end{defn}

Note that if $\module{F}$ is $S$-constructible, then both $\module{F}_\Agroup$ and $\module{F}_\Bgroup$ are 
$S$-finite persistence diagrams.

\begin{prop}[Positivity]
\label{prop:pos}
For each $I \in \Dgm$, $[e] \preceq \module{F}_\Bgroup(I)$.
\end{prop}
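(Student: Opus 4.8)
The plan is to read $\module{F}_\Bgroup$ off the explicit formulas \eqref{eq:square} and \eqref{eq:halfsquare} produced in the proof of the M\"obius Inversion Formula, and to recognize each value there as $\pi$ applied to an honest subquotient of a value of $\module{F}$. Since $\module{F}$ is $S$-constructible, $\module{F}_\Bgroup$ is $S$-finite, so $\module{F}_\Bgroup(I)=[e]$ unless $I$ has the form $[s_i,s_j)$ or $[s_i,\infty)$; and $[e]\preceq[e]$ holds because the identity $e$ is the zero object, so $[e]=\pi([e])\in\pi\big(\Igroup(\Ccat)\big)$. Only the two families of ``corner'' intervals then need attention, and for these \eqref{eq:square} and \eqref{eq:halfsquare} express $\module{F}_\Bgroup(I)$ as an alternating sum of values of $X:=d\module{F}_\Bgroup$, each of which is $\pi$ of the isomorphism class of some image $\image\,\module{F}(p<q)$ (with the $\dd$-shift, or the choice of $s'>s_n$, built into the definition of $d\module{F}_\Igroup$). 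The one algebraic fact I will use repeatedly is that in the abelian category $\Ccat$, whenever $b\hookrightarrow a$ is a subobject the sequence $0\to b\to a\to a/b\to 0$ is exact, so $\pi([a])-\pi([b])=\pi([a/b])\in\pi\big(\Igroup(\Ccat)\big)$; thus it is enough to present each corner value of $\module{F}_\Bgroup$ as $\pi$ of a quotient of two nested subobjects.

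For $I=[s_i,\infty)$ I would put $f=\module{F}(s_i<s')$ and $g=\module{F}(s_{i-1}<s_i)$, so $\module{F}(s_{i-1}<s')=f\circ g$. Since precomposing with the epimorphism onto $\image\,g$ leaves the image unchanged, $\image(f\circ g)=\image\big(f|_{\image\,g}\big)$ is a subobject of $\image\,f$, and \eqref{eq:halfsquare} gives
$$\module{F}_\Bgroup\big([s_i,\infty)\big)=\pi\big([\image\,f]\big)-\pi\big([\image(f\circ g)]\big)=\pi\Big(\big[\,\image\,f\,/\,\image(f\circ g)\,\big]\Big)\in\pi\big(\Igroup(\Ccat)\big),$$
which is exactly $[e]\preceq\module{F}_\Bgroup\big([s_i,\infty)\big)$. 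The boundary case $i=1$, with $s_0$ any value below $s_1$, is the same (there $\image\,g=[e]$, so the quotient is just $\image\,f$).

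For $I=[s_i,s_j)$ I would write $s_j^-=s_j-\dd$, $s_{j+1}^-=s_{j+1}-\dd$, let $t=\module{F}(s_j^-<s_{j+1}^-)$, and for $k\in\{i-1,i\}$ set $W_k=\image\,\module{F}(s_k<s_j^-)\subseteq\module{F}(s_j^-)$. As before, $\image\,\module{F}(s_k<s_{j+1}^-)=\image\big(t|_{W_k}\big)$, and the first isomorphism theorem gives $0\to\ker\big(t|_{W_k}\big)\to W_k\to\image\big(t|_{W_k}\big)\to 0$, so $X([s_k,s_j))-X([s_k,s_{j+1}))=\pi\big([\ker(t|_{W_k})]\big)$. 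Regrouping \eqref{eq:square} as $\big(X([s_i,s_j))-X([s_i,s_{j+1}))\big)-\big(X([s_{i-1},s_j))-X([s_{i-1},s_{j+1}))\big)$, and noting that $W_{i-1}\subseteq W_i$ (because $\module{F}(s_{i-1}<s_j^-)$ factors through $\module{F}(s_i<s_j^-)$) and hence $\ker(t|_{W_{i-1}})\subseteq\ker(t|_{W_i})$, I obtain
$$\module{F}_\Bgroup\big([s_i,s_j)\big)=\pi\big([\ker(t|_{W_i})]\big)-\pi\big([\ker(t|_{W_{i-1}})]\big)=\pi\Big(\big[\,\ker(t|_{W_i})\,/\,\ker(t|_{W_{i-1}})\,\big]\Big)\in\pi\big(\Igroup(\Ccat)\big).$$
Since $[e]\preceq x$ means precisely $x\in\pi\big(\Igroup(\Ccat)\big)$, this settles every case. (When $j=n$ one chooses $s_{n+1}>s_n$ as in the M\"obius formula; since $\module{F}$ is an isomorphism beyond $s_n$, the image read off there agrees with the one used in $d\module{F}_\Igroup$, so nothing changes.)

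The step I expect to cost the most care is the abelian-category bookkeeping: checking that ``restrict first, then take image or kernel'' really yields a subobject of the claimed ambient object, so that $\image(f\circ g)\hookrightarrow\image\,f$ and $\ker(t|_{W_{i-1}})\hookrightarrow\ker(t|_{W_i})$; and matching the $\dd$-shift and the choices of $s_0$, $s'$, $s_{n+1}$ from the M\"obius formula against the three cases defining $d\module{F}_\Igroup$. None of this is deep, but it is exactly where a careless argument would go wrong; the conceptual content is just that $\module{F}_\Bgroup([s_i,s_j))$ is the class of the object of ``features born at $s_i$ and dying at $s_j$,'' which is a genuine object rather than a formal difference.
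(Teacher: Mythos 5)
Your proof is correct and follows essentially the same route as the paper's: reduce to the corner intervals, regroup Equation~\eqref{eq:square} into two differences, identify each difference as the class of $\ker(t|_{W_k})$ (which is exactly the paper's $\image\,\module{F}(s_k<s_j-\dd)\cap\ker\,\module{F}(s_j-\dd<s_{j+1}-\dd)$), and conclude via the quotient of the nested subobjects, with the half-infinite case handled by \eqref{eq:halfsquare} just as in the paper.
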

\begin{proof}
Suppose $\module{F}$ is $S = \{ s_1 < \cdots < s_n \}$-constructible.
We need only show the inequality for intervals $I$ of the form $[s_i,s_j)$ and $[s_i, \infty)$.
For all other $I$, $\module{F}_\Bgroup(I) = [e]$.

Suppose $I = [s_i,s_j)$.
Consider the following subdiagram of $\module{F}$, for a sufficiently small $\dd > 0$:
\begin{equation*}
\xymatrix{
\module{F}(s_{i-1}) \ar[d]^{} \ar[rrr]^{\module{F}(s_{i-1} < s_i )} &&& \module{F}(s_i)  \ar[d]^{\module{F}(s_i < s_j -\dd)} \\
\module{F}(s_{j+1}-\dd)  &&& \module{F}(s_j-\dd). \ar[lll]^{\module{F}(s_j -\dd < s_{j+1}-\dd)}
}
\end{equation*}
Here we interpret $s_0$ as any value less than $s_1$ and $s_{n+1}$ as any value greater than $s_{n}$.
By Equation~\ref{eq:square},
\begin{equation*}
\module{F}_\Bgroup\big( [s_i,s_j) \big) = d\module{F}_\Bgroup\big( [s_i,s_{j}) \big) - d \module{F}_\Bgroup \big( [s_i,s_{j+1}) \big) + 
d\module{F}_\Bgroup \big( [s_{i-1},s_{j+1}) \big) - d\module{F}_\Bgroup \big( [s_{i-1},s_j) \big) \\
\end{equation*}
Observe
	\begin{equation*}
	\begin{split}
	d\module{F}_\Bgroup \big( [s_i,s_{j}) \big) - d \module{F}_\Bgroup \big( [s_i,s_{j+1}) \big) =&
	\left [ \image\ \module{F}(s_i < s_j -\dd) \right ] \\
	&- \left [ \frac{\image\ \module{F}(s_i < s_j -\dd)}
	{\image\ \module{F}(s_{i} < s_j -\dd ) \cap \ker\ \module{F}(s_j -\dd < s_{j+1}-\dd)} \right] \\
	=& \left [ \image\ \module{F}(s_i < s_j -\dd) \right ] - \left [ \image\ \module{F}(s_i < s_j -\dd) \right ] \\
	&+ \left[ \image\ \module{F}(s_i < s_j -\dd) \cap \ker\ \module{F}(s_j -\dd < s_{j+1}-\dd) \right] \\
	=& \left[ \image\ \module{F}(s_i < s_j -\dd) \cap \ker\ \module{F}(s_j -\dd < s_{j+1}-\dd) \right].
	\end{split}
	\end{equation*}
Here the intersection is interpreted as the pullback of the two subobjects.
By a similar argument,
$$d\module{F}_\Bgroup \big( [s_{i-1},s_{j+1}) \big) - d\module{F}_\Bgroup \big( [s_{i-1},s_j) \big) = 
-\big[ \image\ \module{F}(s_{i-1} < s_j -\dd ) \cap \ker\ \module{F}(s_j -\dd < s_{j+1}-\dd) \big]~.$$
Note that
$$\image\ \module{F}(s_{i-1} < s_j -\dd ) \cap \ker\ \module{F}(s_j -\dd < s_{j+1}-\dd)$$
is a subobject of
$$\image\ \module{F}(s_i < s_j -\dd) \cap \ker\ \module{F}(s_j -\dd < s_{j+1}-\dd).$$
Therefore 
$$\module{F}_\Bgroup\big( [s_i,s_j) \big)  = \left[ \frac{\image\ \module{F}(s_i < s_j -\dd) \cap \ker\ \module{F}(s_j -\dd < s_{j+1}-\dd)}
{\image\ \module{F}(s_{i-1} < s_j -\dd ) \cap \ker\ \module{F}(s_j -\dd < s_{j+1}-\dd)} \right] \succeq [e].$$

Suppose $I = [s_i,\infty)$.
Then by a similar argument using Equation \ref{eq:halfsquare}, we have
$$\module{F}_\Bgroup\big( [s_i,\infty) \big)  = \left[ \frac{\image\ \module{F}(s_i < s_{n+1})}
{\image\ \module{F}(s_{i-1} < s_{n+1} ) } \right] \succeq [e].$$
\end{proof}

\begin{ex}
\label{ex:pmod_0}
See Figure \ref{fig:finset} for an example of a persistence module in $\Pmod(\Finset)$ and
its type $\Agroup$ persistence diagram.
Note that $\Finset$ is not an abelian category so it does not have a type $\Bgroup$ persistence
diagram.
	\begin{figure}
	\centering
	\subfigure[Persistence module]{\includegraphics[scale=0.75]{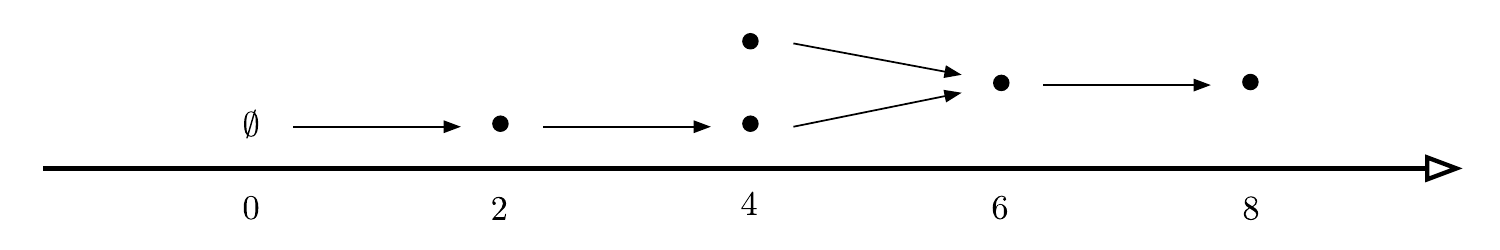}}\\ 
	\subfigure[Type $\Agroup$ persistence diagram]{\includegraphics[scale=0.75]{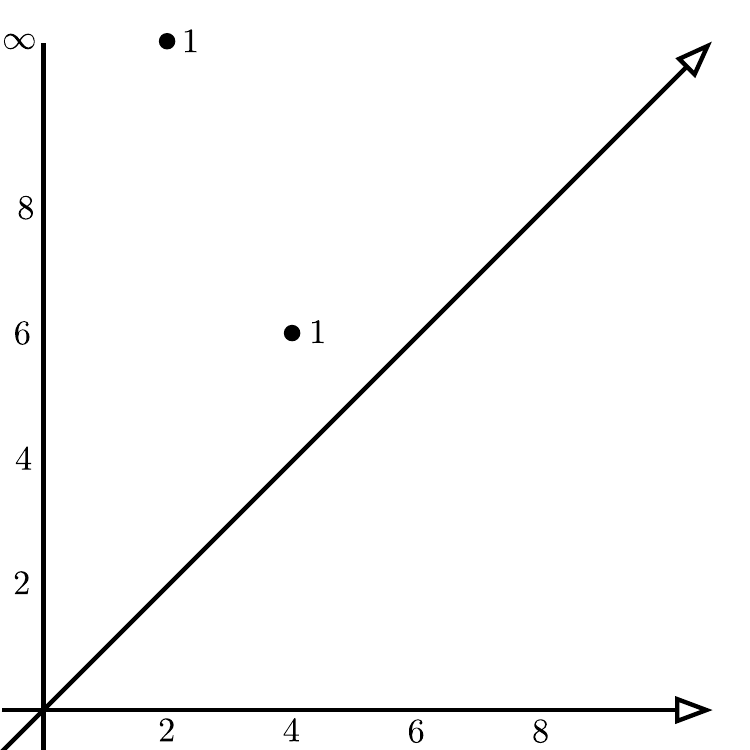}}\\
	\caption{Here we have an example of a persistence module in $\Pmod(\Finset)$ and its type $\Agroup$ persistence diagram.}
	\label{fig:finset}
	\end{figure}
\end{ex}

\begin{ex}
\label{ex:pmod_1}
See Figure~\ref{fig:vect} for an example of a persistence module in $\Pmod(\Vect)$ and its type $\Agroup$
and type $\Bgroup$ persistence diagrams.
Note that the quotient map $\pi : \Agroup(\Vect) \to \Bgroup(\Vect)$ is an isomorphism and therefore
the two diagrams are the same.

	\begin{figure}
	\centering
	\subfigure[Persistence module]{\includegraphics[scale=0.75]{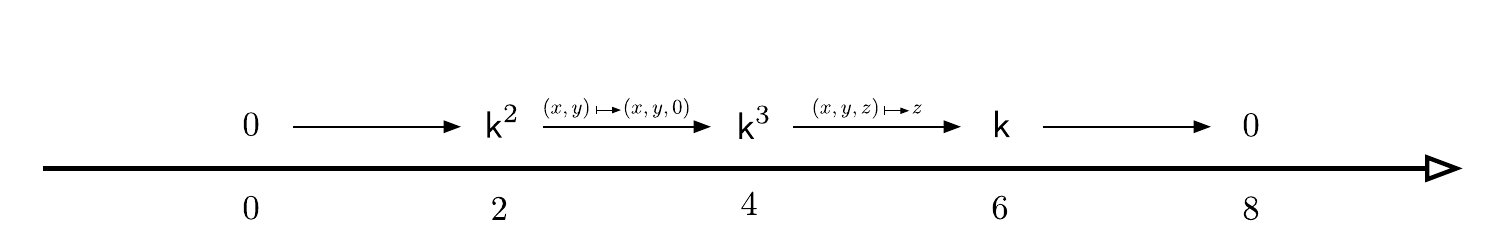}}\\ 
	\subfigure[Type $\Agroup$ and $\Bgroup$ persistence diagrams]{\includegraphics[scale=0.75]{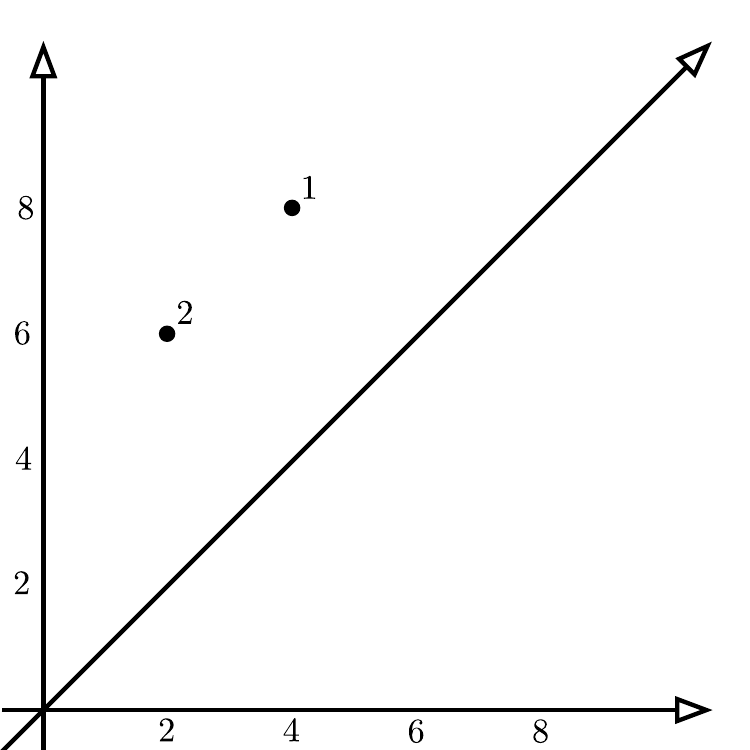}}\\
	\caption{Here we have an example of a persistence module in $\Pmod(\Vect)$ and its type $\Agroup$ and $\Bgroup$ persistence diagrams.}
	\label{fig:vect}
	\end{figure}
\end{ex}

\begin{ex}
\label{ex:pmod_2}
See Figure \ref{fig:ab} for an example of a persistence module in $\Pmod(\Ab)$ and its type
$\Agroup$ persistence diagram. Note that the quotient map $\pi: \Agroup(\Ccat) \to \Bgroup(\Ccat)$
forgets torsion and therefore the type $\Bgroup$ persistence diagram is, for this example, zero.
	\begin{figure}
	\centering
	\subfigure[Persistence module] {\includegraphics[scale=0.75]{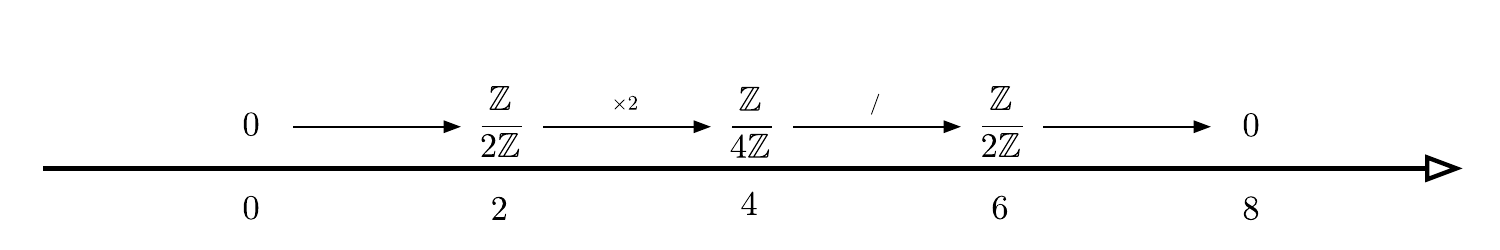}}\\ 
	\subfigure[Type $\Agroup$ persistence diagram]{\includegraphics[scale=0.75]{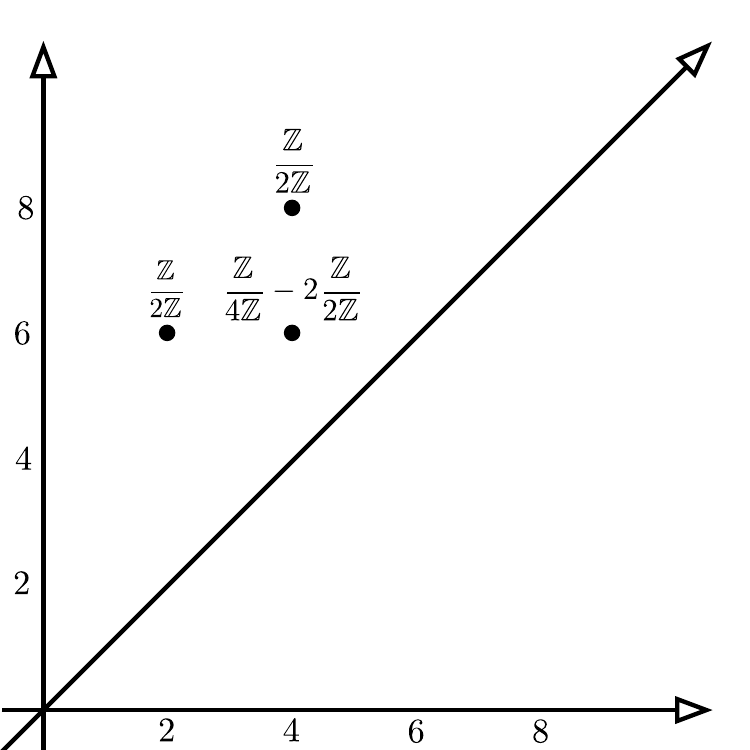}}\\
	\caption{Here we have an example of a persistence module in $\Pmod(\Ab)$ and its type $\Agroup$ persistence diagram.
	The map from $4$ to $6$ is the quotient of $\sfrac{\Zspace}{4\Zspace}$
	by the image of the previous map.}
	\label{fig:ab}
	\end{figure}
\end{ex}

\begin{ex}
See Figure \ref{fig:finab} for an example of a persistence module in $\Pmod(\Finab)$ and its type $\Agroup$
and type $\Bgroup$ persistence diagrams.
	\begin{figure}
	\centering
	\subfigure[Persistence module] 
	{\includegraphics[scale=0.75]{mod_2.pdf}}\\
	\subfigure[Type $\Agroup$ persistence diagram]{\includegraphics[scale=0.75]{ex_ab_1}} \goodgap
	\subfigure[Type $\Bgroup$ persistence diagram]{\includegraphics[scale=0.75]{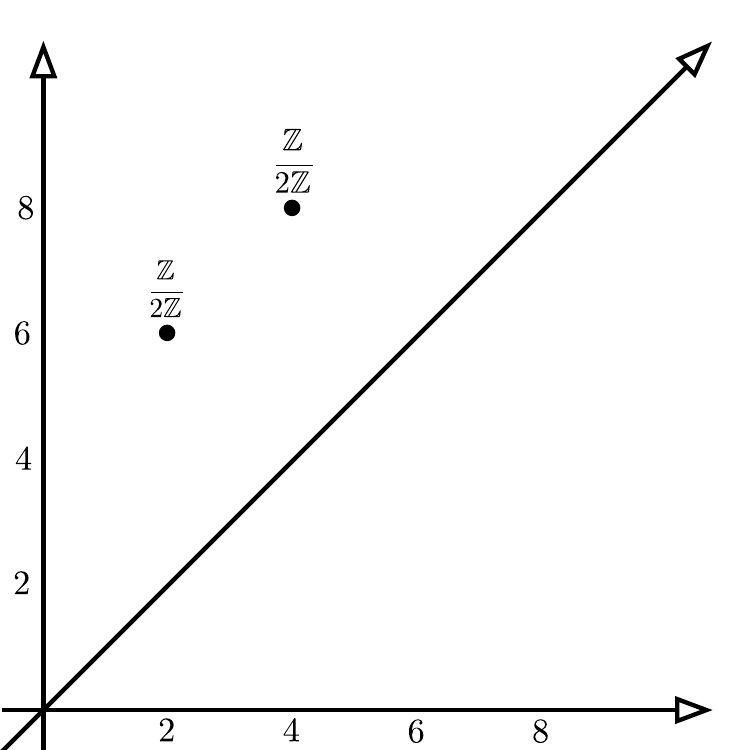}}
	\caption{Here we have an example of a persistence module in $\Pmod(\Finab)$ and its type $\Agroup$ and type $\Bgroup$ persistence diagrams.
	This is the same example module as in Figure \ref{fig:ab}.}
	\label{fig:finab}
	\end{figure}
\end{ex}

\begin{ex}
\label{ex:pmod_3}
See Figure \ref{fig:repn} for an example of a persistence module in $\Pmod \big( \Rep(\Nspace) \big)$ and its
type $\Agroup$ and type $\Bgroup$ persistence diagrams.
	\begin{figure}
	\centering
	\subfigure[Persistence module]{\includegraphics[scale=0.75]{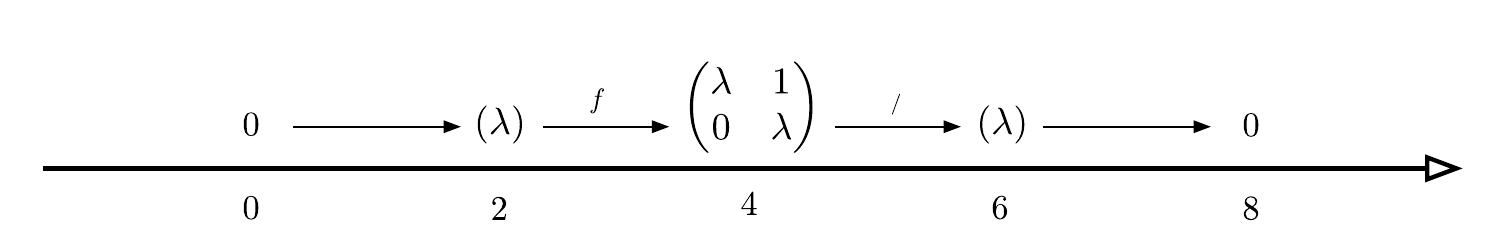}}\\ 
	\subfigure[Type $\Agroup$ persistence diagram]{\includegraphics[scale=0.75]{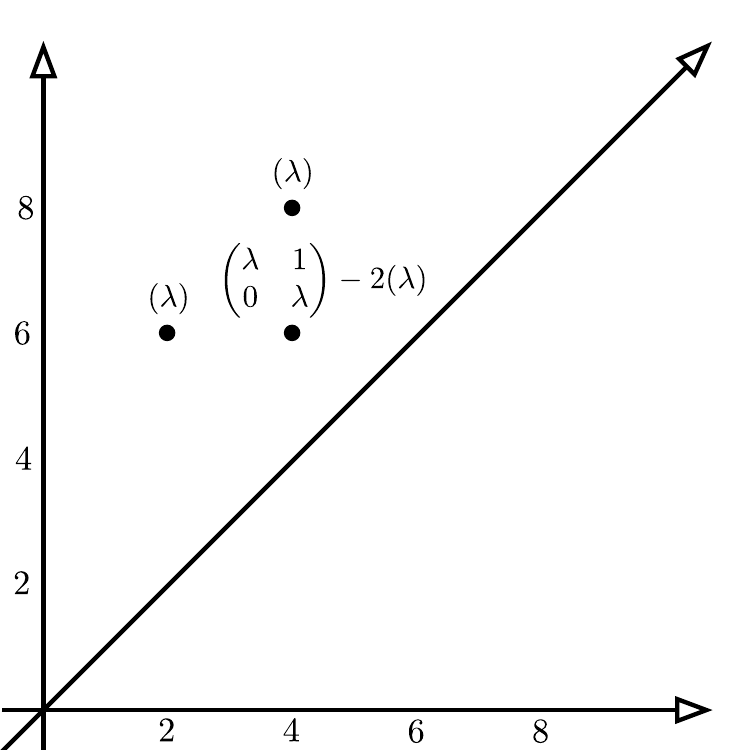}} \goodgap
	\subfigure[Type $\Bgroup$ persistence diagram]{\includegraphics[scale=0.75]{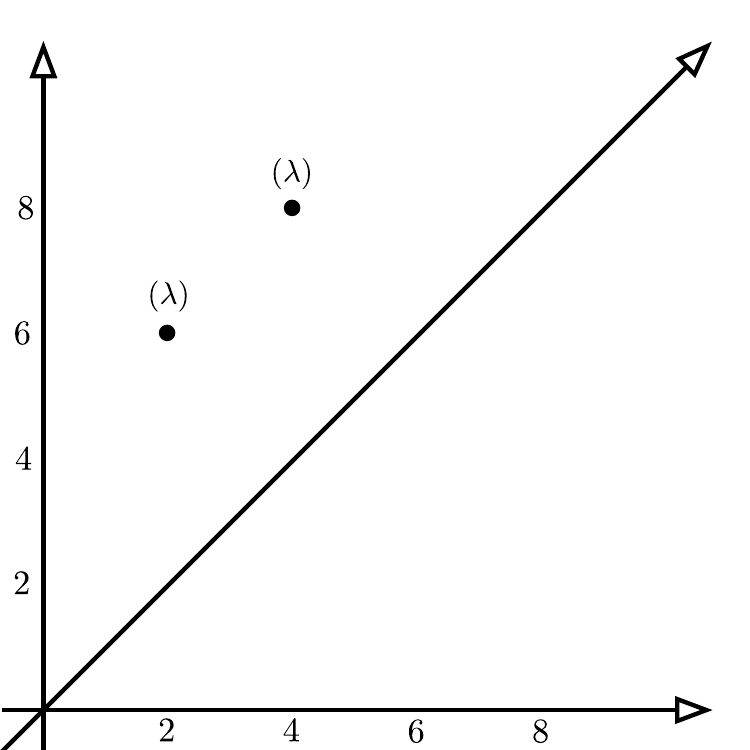}}\\
	\caption{Here we have an example of a persistence module in $\Pmod(\Ab)$ and its type $\Agroup$ 
	and type $\Bgroup$ persistence diagrams.
	The map from $4$ to $6$ is the quotient by the image of $f$.}
	\label{fig:repn}
	\end{figure}
\end{ex}

\section{Stability}
We now relate the interleaving distance between persistence modules to the erosion
distance between their persistence diagrams. 

For the first theorem, we make a simplifying assumption on $\Ccat$ that makes it possible to chase diagrams.
We assume that $\Ccat$ is concrete and that its images are concrete.
That is, $\Ccat$ embeds into the category $\Set$ and an image of a morphism in $\Ccat$ is the image 
of the corresponding set map.
Note that all our examples satisfy this criteria. 
By the Freyd-Mitchell embedding theorem \cite[page 28]{Weibel_homo}, an essentially small abelian category $\Ccat$
embeds into the category of $R$-modules, for some ring $R$, and the image of a morphism in $\Ccat$
is the image under the corresponding set map.
Therefore, all essentially small abelian categories satisfy our criteria.

\begin{thm}[Semicontinuity]
\label{thm:one}
Let $\Ccat$ be an essentially small symmetric monoidal category with images.
Suppose $\module{F} \in \Pmod(\Ccat)$ is $S = \{s_1 < \cdots < s_n \}$-constructible and
let
	$$\rho = \frac{1}{4} \min_{1< i \leq n } (s_{i} - s_{i-1}).$$
For any second persistence module $\module{G} \in \Pmod(\Ccat)$ such that $\ee = \Dist_I(\module{F}, \module{G}) < \rho$,
there is a morphism
	$$\nabla^\ee \big( \module{F}_\Agroup \big) \to \module{G}_\Agroup$$
in $\PDgm \big( \Agroup(\Ccat) \big)$.
\end{thm}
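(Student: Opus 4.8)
\emph{Setup of the plan.} The plan is to reduce the existence of the morphism $\nabla^\ee(\module{F}_\Agroup)\to\module{G}_\Agroup$ to a pointwise comparison of the M\"obius-uninverted maps, and then to establish that comparison by chasing the interleaving through a suitable rectangle of transition maps of $\module{F}$ and $\module{G}$.

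\emph{Reduction.} By definition a morphism $\nabla^\ee(\module{F}_\Agroup)\to\module{G}_\Agroup$ is the assertion that $\sum_{J\supseteq I}\nabla^\ee(\module{F}_\Agroup)(J)\preceq\sum_{J\supseteq I}\module{G}_\Agroup(J)$ for every $I\in\Dgm$ with $\nabla^\ee(\module{F}_\Agroup)(I)\neq[e]$. By the Proposition that $\Grow^\ee$ commutes with the M\"obius inversion formula, the left side equals $d\module{F}_\Agroup\big(\Grow^\ee(I)\big)$, and by the M\"obius Inversion Formula the right side equals $d\module{G}_\Agroup(I)$. Since $\module{F}_\Agroup$ is $S$-finite, $\nabla^\ee(\module{F}_\Agroup)(I)\neq[e]$ forces $\Grow^\ee(I)$ to be some $[s_i,s_j)$ or $[s_i,\infty)$, i.e.\ $I=[s_i+\ee,s_j-\ee)$ or $I=[s_i+\ee,\infty)$. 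So it suffices to prove $d\module{F}_\Agroup([s_i,s_j))\preceq d\module{G}_\Agroup([s_i+\ee,s_j-\ee))$, and likewise for the half-infinite intervals. (If $\ee=0$ then $\module{F}\cong\module{G}$, $\nabla^0(\module{F}_\Agroup)=\module{F}_\Agroup$, and the morphism is just reflexivity; so assume $\ee>0$.) In fact I expect to prove the stronger statement that these are \emph{equalities}.

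\emph{Diagram chase.} Fix an $\ee$-interleaving $\phi\colon\module{F}\to\Delta^\ee(\module{G})$, $\psi\colon\module{G}\to\Delta^\ee(\module{F})$ and a value $\dd>0$ small enough to compute both $d\module{F}_\Agroup$ and $d\module{G}_\Agroup$ and with $\dd+4\ee<\min_{1<i\le n}(s_i-s_{i-1})$; the last is possible precisely because $\ee<\rho$. Treat $I=[s_i+\ee,s_j-\ee)$ (the half-infinite case is identical, with $s_j$ replaced by a sufficiently large real number). Write $g=\module{G}(s_i+\ee<s_j-\dd-\ee)$, so that $d\module{G}_\Agroup([s_i+\ee,s_j-\ee))=[\image\,g]$ and $d\module{F}_\Agroup([s_i,s_j))=[\image\,\module{F}(s_i<s_j-\dd)]$. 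Repeated use of the naturality squares of $\phi$ and $\psi$ together with the two interleaving identities yields factorizations
\[
\module{F}(s_i<s_j-\dd)=\psi_{s_j-\dd-\ee}\circ g\circ\phi_{s_i},
\qquad
g=\phi_{s_j-\dd-2\ee}\circ\module{F}(s_i+2\ee<s_j-\dd-2\ee)\circ\psi_{s_i+\ee}.
\]
The bound on $\dd$ and $\ee$ guarantees $s_i,s_i+2\ee\in[s_i,s_{i+1})$ and $s_j-\dd-2\ee,s_j-\dd\in[s_{j-1},s_j)$, so by $S$-constructibility of $\module{F}$ the composites $\psi_{s_i+\ee}\circ\phi_{s_i}=\module{F}(s_i<s_i+2\ee)$ and $\psi_{s_j-\dd-\ee}\circ\phi_{s_j-\dd-2\ee}=\module{F}(s_j-\dd-2\ee<s_j-\dd)$ are isomorphisms. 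From the second factorization, $\image\,g$ factors through the monomorphism $\phi_{s_j-\dd-2\ee}$, hence is a subobject of $\image\,\phi_{s_j-\dd-2\ee}$; and since $\psi_{s_j-\dd-\ee}\circ\phi_{s_j-\dd-2\ee}$ is an isomorphism, $\psi_{s_j-\dd-\ee}$ restricts to an isomorphism on $\image\,\phi_{s_j-\dd-2\ee}$, hence carries the subobject $\image\,g$ isomorphically onto its image. On the other hand, because $\psi_{s_i+\ee}\circ\phi_{s_i}$ is an isomorphism, the second factorization also shows — on underlying sets, using that $\Ccat$ is concrete with concrete images — that $g$ carries $\image\,\phi_{s_i}$ onto all of $\image\,g$, so $\image(g\circ\phi_{s_i})=\image\,g$. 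Combining these with the first factorization, $\image\,\module{F}(s_i<s_j-\dd)=\psi_{s_j-\dd-\ee}(\image\,g)\cong\image\,g$ in $\Ccat$, whence $d\module{F}_\Agroup([s_i,s_j))=d\module{G}_\Agroup([s_i+\ee,s_j-\ee))$, and in particular $\preceq$ holds. This proves the morphism.

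\emph{Main obstacle.} The delicate step is the final isomorphism $\image\,\module{F}(s_i<s_j-\dd)\cong\image\,g$. In $\Agroup(\Ccat)$ a subobject inclusion does \emph{not} in general give a $\preceq$ relation — for instance $[\Zspace/2\Zspace]\not\preceq[\Zspace/4\Zspace]$ in $\Agroup(\Ab)$ — so it is not enough to exhibit $\image\,\module{F}(s_i<s_j-\dd)$ as a subobject of $\image\,g$; one must produce an honest isomorphism. This is exactly what forces the use of \emph{both} halves of the interleaving (to get both factorizations), the hypothesis $\ee<\rho$ (so that the relevant transition maps of $\module{F}$ over the windows $[s_{i-1},s_i)$ are isomorphisms — this is where the constant $\frac{1}{4}$ enters), and the concreteness of $\Ccat$ (to conclude $\image(g\circ\phi_{s_i})=\image\,g$ rather than merely $\le$). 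Note the argument is genuinely asymmetric in $\module{F}$ and $\module{G}$: it needs $\ee$ small relative to the constructibility set of $\module{F}$ only, which is why one gets semicontinuity rather than continuity for type $\Agroup$ diagrams.
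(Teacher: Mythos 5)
Your proposal is correct and follows essentially the same route as the paper: the same reduction via M\"obius inversion and the commutation of $\Grow^\ee$ with it, and the same three-row interleaving diagram in which the two vertical composites are isomorphisms by $S$-constructibility of $\module{F}$ (this is where $\ee<\rho$ enters), yielding the equality $d\module{F}_\Agroup([s_i,s_j))=d\module{G}_\Agroup([s_i+\ee,s_j-\ee))$. You merely make explicit the concrete diagram chase that the paper leaves implicit (your two factorizations are exactly the paths through that diagram), so aside from the slightly loose phrase calling $\phi_{s_j-\dd-2\ee}$ a monomorphism, the argument matches the paper's proof.
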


\begin{proof}
Let $\phi : \module{F} \to \Delta^\ee(\module{G})$ and 
$\psi : \module{G} \to \Delta^\ee(\module{F})$
be an $\ee$-interleaving.
For each $I \in \Dgm$ such that $\module{F}_\Agroup (I) \neq [e]$, 
we must show 
	$$d \module{F}_\Agroup \circ \Grow^\ee (I) \preceq d\module{G}_\Agroup (I).$$
By constructibility, it is sufficient to show this inequality for $I = [s_i+\ee, s_j-\ee)$ and $I = [s_i+\ee, \infty)$.
Suppose $I = [s_i+\ee,s_j-\ee)$.
Consider the following commutative diagram:
	\begin{gather}
	\begin{aligned}
	\xymatrix{
	\module{F}(s_i) \ar[d]^{\phi(s_i)} \ar[rrrrr]^{\module{F}( s_i < s_j -\dd )}
	&&&&& \module{F}(s_j -\dd)  \\ 
	\module{G}(s_i + \ee ) \ar[d]^{\psi(s_i +\ee)} \ar[rrrrr]^{\module{G}(s_i +\ee  < s_j - \ee -\dd)} &&&&& \module{G}(s_j - \ee - \dd) 
	\ar[u]^{\psi(s_j -\ee -\dd)} \\
	\module{F}(s_i + 2\ee) \ar[rrrrr]^{\module{F}(s_i + 2 \ee < s_j - 2\ee -\dd)}
	&&&&& \module{F}(s_j  - 2\ee - \dd) \ar[u]^{\phi(s_j -2\ee -\dd)}.
	}
	\end{aligned}
	\label{diagram:interleaving}
	\end{gather}
By $S$-constructibility of $\module{F}$, the two vertical compositions are isomorphisms.
By a diagram chase, we see that
$$d \module{F}_\Agroup \big( [s_i, s_j) \big) = 
d \module{G}_\Agroup \big( [s_i + \ee, s_j - \ee) \big).$$
This proves the claim.
Suppose $I$ is of the form $[s_i,\infty)$, then 
$$ d \module{F}_\Agroup \big( [s_i, \infty) \big) = d \module{G}_\Agroup \big( [s_i + \ee, \infty] \big)$$
by a similar commutative diagram.
\end{proof}
 
Semicontinuity is saying there is an open neighborhood of $\module{F}$ in the metric space of persistence modules such that for each $\module{G}$ in this open neighborhood, 
$\module{F}_\Agroup$ lives on in $\module{G}_\Agroup$.
However, semicontinuity is unsatisfying in two interesting ways.
First, the $\ee$ must be smaller than $\rho$ which is half the injectivity radius of $S$ in $\Rspace$.
Second, $\nabla^\ee \big(\module{F}_\Agroup \big) \to \module{G}_\Agroup$ but we can not prove the converse
$\nabla^\ee \big( \module{G}_\Agroup \big) \to \module{F}_\Agroup$.
The fundamental limitation here is that not all short exact sequences in $\Ccat$ split.

\begin{thm}[Continuity]
\label{thm:two}
Let $\Ccat$ be an essentially small, concrete, abelian category.
For any two persistence modules $\module{F}, \module{G} \in \Pmod(\Ccat)$, we have 
$$\Dist_E\big( \module{F}_\Bgroup, \module{G}_\Bgroup \big) \leq \Dist_I(\module{F}, \module{G}).$$
\end{thm}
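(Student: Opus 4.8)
The plan is to prove Theorem \ref{thm:two} by establishing the key ``Box Lemma''-type estimate: if $\module{F}$ and $\module{G}$ are $\ee$-interleaved, then for every interval $I \in \Dgm$ we have inequalities relating $d\module{F}_\Bgroup \circ \Grow^\ee(I)$ to $d\module{G}_\Bgroup(I)$ and symmetrically. Since an $\ee$-erosion is precisely a pair of morphisms $\nabla^\ee(\module{G}_\Bgroup) \to \module{F}_\Bgroup$ and $\nabla^\ee(\module{F}_\Bgroup) \to \module{G}_\Bgroup$, and a morphism of persistence diagrams is the family of inequalities $\sum_{J \supseteq I} Y_1(J) \preceq \sum_{J \supseteq I} Y_2(J)$, by the M\"obius inversion formula it suffices to show, for all $I$ with $\module{F}_\Bgroup(I) \neq [e]$, that $d\module{F}_\Bgroup \circ \Grow^\ee(I) \preceq d\module{G}_\Bgroup(I)$ (and the symmetric statement). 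This reduces everything to a statement about the ``cumulative'' maps $d\module{F}_\Bgroup$, which are built from images of the structure maps of $\module{F}$.

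First I would reduce, exactly as in the proof of Theorem \ref{thm:one}, to checking the inequality on intervals of the form $I = [s_i + \ee, s_j - \ee)$ and $I = [s_i + \ee, \infty)$, where now $S = \{s_1 < \cdots < s_n\}$ is a common constructibility set for $\module{F}$ and $\module{G}$ (no injectivity-radius hypothesis is needed here — that restriction in Theorem \ref{thm:one} was only to force the vertical compositions to be isomorphisms, which we will not need). The essential computation is the following: given the interleaving maps $\phi, \psi$, the image of $\module{F}(s_i < q)$ surjects onto (a subquotient isomorphic to) the image of $\module{G}(s_i + \ee < q - \ee)$ via $\phi$, and likewise with the roles reversed via $\psi$. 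Concretely, $\phi$ induces an epimorphism $\image\ \module{F}(s_i < q) \epi \image\ \module{G}(s_i+\ee < q-\ee)$ composed with the appropriate monomorphism. In an abelian category, such an epimorphism $a \epi c$ fits into a short exact sequence $0 \to \ker \to a \to c \to 0$, hence in $\Bgroup(\Ccat)$ we get $[c] \preceq [a]$ because $[a] = [c] + [\ker]$ and $[\ker] \in \pi(\Igroup(\Ccat))$. Passing these comparisons through the four-term alternating sum defining $d\module{F}_\Bgroup$ on an interval (the $X(I) = X$ applied to images of structure maps), I get $d\module{G}_\Bgroup([s_i+\ee, s_j-\ee)) \preceq d\module{F}_\Bgroup([s_i, s_j))$, which after reindexing by $\Grow^\ee$ is exactly the needed inequality; the symmetric role of $\psi$ gives the other direction.

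The main obstacle — and the place where abelianness (versus mere symmetric monoidality) is genuinely used — is controlling how images of structure maps interact with the interleaving maps at the level of $\Bgroup$ rather than just getting an isolated comparison. In Theorem \ref{thm:one} the diagram chase in diagram \eqref{diagram:interleaving} produced an \emph{equality} $d\module{F}_\Agroup([s_i,s_j)) = d\module{G}_\Agroup([s_i+\ee, s_j-\ee))$ only because the vertical compositions were isomorphisms, which required $\ee < \rho$. Without that, the vertical maps $\module{F}(s_i) \to \module{F}(s_i + 2\ee)$ need not be isomorphisms and the naive chase breaks. The fix is that in $\Bgroup(\Ccat)$ the class of an image is insensitive to kernels that get killed, so I can afford to lose information: I would carefully track, using the factorization $\sigma^{2\ee} = \Delta^\ee(\psi)\circ\phi$, that $\image\ \module{F}(s_i < q)$ and $\image\ \module{F}(s_i + 2\ee < q - 2\ee)$ differ by a subobject lying in the kernel of a structure map, and that in the alternating-sum formula \eqref{eq:square} these discrepancies cancel or contribute nonnegatively to $\Bgroup$. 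The delicate bookkeeping is making sure every ``error term'' that appears is a genuine subquotient (so its class lies in $\pi(\Igroup(\Ccat))$ and hence is $\succeq [e]$) and that the signs in \eqref{eq:square} line up so these errors always push the inequality in the favorable direction — this is where the concreteness assumption on $\Ccat$ is invoked, to legitimately chase elements through the diagram. I would then repeat the argument verbatim with $\phi$ and $\psi$ interchanged (and $\module{F} \leftrightarrow \module{G}$) to produce the second morphism $\nabla^\ee(\module{G}_\Bgroup) \to \module{F}_\Bgroup$, completing the $\ee$-erosion and hence the bound $\Dist_E(\module{F}_\Bgroup, \module{G}_\Bgroup) \leq \Dist_I(\module{F}, \module{G})$.
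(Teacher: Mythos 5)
Your first paragraph sets up the right goal (and correctly observes that the injectivity-radius restriction of Theorem~\ref{thm:one} is not needed), but the ``essential computation'' in your second paragraph goes in the wrong direction and does not prove that goal. The morphism $\nabla^\ee(\module{F}_\Bgroup) \to \module{G}_\Bgroup$ requires, at each relevant $I = [s_i+\ee, s_j-\ee)$, the inequality $d\module{F}_\Bgroup\big([s_i,s_j)\big) \preceq d\module{G}_\Bgroup\big([s_i+\ee,s_j-\ee)\big)$: the image of $\module{F}$ over the \emph{large} interval must be dominated by the image of $\module{G}$ over the \emph{shrunken} one. You instead assert that $\phi$ induces an epimorphism $\image\ \module{F}(s_i < q) \epi \image\ \module{G}(s_i+\ee < q-\ee)$ and conclude $d\module{G}_\Bgroup\big([s_i+\ee,s_j-\ee)\big) \preceq d\module{F}_\Bgroup\big([s_i,s_j)\big)$, calling this ``exactly the needed inequality''; it is the reverse of the inequality you yourself identified as sufficient, and reindexing by $\Grow^\ee$ cannot repair the direction (nor does swapping $\phi$ and $\psi$: that again produces the reversed inequality with $\module{F}$ and $\module{G}$ interchanged, not the second erosion morphism). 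Moreover the asserted epimorphism is false in general: $\module{F}=0$ is $\ee$-interleaved with any $\module{G}$ whose structure maps over intervals of length $2\ee$ vanish, yet $\image\ \module{G}(s_i+\ee < q-\ee)$ can be nonzero while $\image\ \module{F}(s_i<q)=0$, so no epimorphism from the latter onto the former exists.

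The correct mechanism, which is what the paper uses, is the factorization that the interleaving actually provides: by naturality and $\sigma^{2\ee}_{\module{F}} = \Delta^\ee(\psi)\circ\phi$, one has $\module{F}(s_i < s_j-\dd) = \psi(s_j-\ee-\dd)\circ\module{G}(s_i+\ee < s_j-\ee-\dd)\circ\phi(s_i)$, so $\image\ \module{F}(s_i < s_j-\dd)$ is a quotient of a subobject of $\image\ \module{G}(s_i+\ee < s_j-\ee-\dd)$. In $\Bgroup(\Ccat)$ both passage to a subobject and to a quotient can only decrease the class, since $[b] = [a] + [b/a]$ with both summands in $\pi\big(\Igroup(\Ccat)\big)$; hence $d\module{F}_\Bgroup\circ\Grow^\ee(I) \preceq d\module{G}_\Bgroup(I)$ directly, and interchanging the roles of $\module{F}$ and $\module{G}$ gives the morphism $\nabla^\ee(\module{G}_\Bgroup) \to \module{F}_\Bgroup$. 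Note also that your worry about ``delicate bookkeeping'' in the four-term alternating sum is misplaced: once you have reduced to the cumulative functions via the M\"obius inversion formula, $d\module{F}_\Bgroup(I)$ is a single image class (the alternating sum of Equation~\ref{eq:square} defines $\module{F}_\Bgroup$, not $d\module{F}_\Bgroup$), so one subquotient comparison per interval suffices and no cancellation of error terms needs to be tracked.
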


\begin{proof}
Let $\ee = \Dist_I(\module{F}, \module{G})$.
For each $I \in \Dgm$ such that $\module{F}_\Agroup(I) \neq [e]$, we must show 
	$$d \module{F}_\Agroup \circ \Grow^\ee (I) \preceq d\module{G}_\Agroup (I)$$
and for each $I \in \Dgm$ such that $\module{G}_\Bgroup(I) \neq [e]$, 
we must show
	$$d \module{G}_\Agroup \circ \Grow^\ee (I) \preceq d\module{F}_\Agroup (I).$$
We will prove the first inequality and the second inequality follows 
by simply interchanging the roles of $\module{F}$ and $\module{G}$ in the proof.

Suppose $\module{F}$ is $S = \{s_1 < \cdots < s_n \}$-constructible.
By constructibility, it is sufficient to show the first inequality for $I$ of the form $[s_i+\ee, s_j-\ee)$ and $[s_i+\ee, \infty)$.
Suppose $I = [s_i+\ee,s_j-\ee)$.
Let $\phi : \module{F} \to \Delta^\ee(\module{G})$ and $\psi : \module{G} \to \Delta^\ee(\module{F})$
be an $\ee$-interleaving.
Consider the following commutative diagram:
	\begin{gather}
	\begin{aligned}
	\xymatrix{
	\module{F}(s_i) \ar[d]^{\phi(s_i)} \ar[rrrrr]^{\module{F}( s_i < s_j -\dd )}
	&&&&& \module{F}(s_j -\dd)  \\ 
	\module{G}(s_i + \ee ) \ar[rrrrr]^{\module{G}(s_i +\ee  < s_j - \ee -\dd)} &&&&& \module{G}(s_j - \ee - \dd).
	\ar[u]^{\psi(s_j -\ee -\dd)}
	}
	\end{aligned}
	\label{diagram:interleaving}
	\end{gather}
By commutativity,
	$$\image\ \module{F}(s_i < s_j - \dd) \cong \frac{\image\ \module{G}(s_i + \ee < s_j - \ee - \dd)}{
	\image\ \module{G}(s_i + \ee < s_j - \ee - \dd) \cap \ker\; \psi(s_j-\ee-\dd)}.$$
Therefore
	\begin{equation*}
	\begin{split}
	d \module{F}_\Bgroup \big( [ s_i < s_j ) \big) &= d \module{G}(s_i + \ee < s_j - \ee) - 
	[ \ker\ \psi(s_j-\ee-\dd)] \\
	&\preceq d \module{G}_\Bgroup \big( [s_i +\ee  < s_j - \ee ) \big)
	\end{split}
	\end{equation*}	
This proves the claim.
Suppose $I = [s_i,\infty)$.
Then
	 $$d \module{F}_\Bgroup \big( [ s_i < \infty ) \big) \preceq  d \module{G}_\Bgroup \big( [s_i +\ee  < \infty) \big).$$
by a similar commutative diagram.
\end{proof}

\section{Concluding Remarks}
\label{sec:end}

\paragraph{Torsion in data.}
We hope our theory will allow for the study of torsion in data.
For example, let $P \subset \Rspace^n$ be a finite set of points.
Let $f : \Rspace^n \to \Rspace$ be a function dependent on $P$, 
for example $f(x) = \min_{p \in P} || x - p ||_2$.
Apply homology with integer coefficients to the sublevel set filtration induced by $f$
and we have a constructible persistence module $\module{F} \in \Pmod(\Ab)$.
Its type $\Agroup$ persistence diagram is measuring torsion in data and semicontinuity applies.
If continuity is required, then we may look at the type $\Bgroup$ persistence diagram of $\module{F}$.
However, the type $\Bgroup$ persistence diagram forgets all torsion.
Perhaps a better approach is to apply homology with coefficients in a finite abelian group.
Then the resulting persistence module is in $\Pmod(\Finab)$ and its type $\Bgroup$
diagram encodes simple torsion.

\paragraph{Time series.}
The flexibility we offer in choosing $\Ccat$ should allow for the encoding of more structure in data.
Consider time series data.
Suppose $P = \{ p_1, \cdots, p_{k} \}$
is a finite sequence of points in $\Rspace^n$.
There is more to $P$ than its shape.
The forward shift $p_i \to p_{i+1}$ along the sequence should induce dynamics on the shape of $P$ at each scale.
The algebraic object of study is not clear, but it will certainly have more structure than a vector space
or an abelian group.

\paragraph{Non-constructible modules.}
Suppose we are given an infinite set of points $P \subset \Rspace^n$.
Then the resulting persistence module, as constructed above, is not constructible.
Is there a persistence diagram for a non-constructible persistence module?

This question is addressed by \cite{crazy_persistence} for $\Ccat = \Vect$.
They define a persistence diagram for a non-constructible persistence module as a
\emph{rectangular measure} $\mu : \Rect \to \Nspace$,
where $\Rect$ is the poset of all pairs $J \supset I$ in $\Dgm$, satisfying a certain additivity condition.
Our type $\Bgroup$ diagram should generalize to a rectangular measure.
For $\Ccat$ abelian, we may use an argument similar to the one in the proof of Proposition \ref{prop:pos} 
to assign an element of $\Bgroup(\Ccat)$ to each $J \supset I$ without making use of constructibility.
Is this assignment a rectangular measure?

\section*{Acknowledgements}
We thank Robert MacPherson for his mentorship and support.
We thank Vin de Silva for detailed comments on earlier versions of this paper.
We also thank the participants of the MacPherson Seminar on applied topology
for listening and providing helpful feedback.
Finally, we thank our anonymous reviewers for their patience and transformative feedback.

{\bibliography{ref.bib}}
\bibliographystyle{alpha}

\newpage
\appendix
\section{Krull-Schmidt}
\label{sec:krull_schmidt}
We now provide a compact treatment of Krull-Schmidt categories.
The following ideas are classical and may be found in many books, for example~\cite{anderson_fuller}.

A category $\Ccat$ is \emph{additive} if all its hom-sets are abelian, composition is bilinear, and finite products
and finite coproducts are the same. 
The (co)product of the empty set is the \emph{zero object} of $\Ccat$.
Suppose $\Ccat$ is additive.

\begin{defn}
A non-zero object $a \in \Ccat$ is \define{indecomposable} if it is not the direct sum of two non-zero objects.
\end{defn}

\begin{defn}
An additive category $\Ccat$ is \define{Krull-Schmidt} if each object $a \in \Ccat$ is isomorphic to a finite direct sum
$a \cong a_1 \oplus a_2 \oplus \cdots \oplus a_n$ and each ring of endomorphisms $\End_\Ccat(a_i)$ is \emph{local}.
That is, $0 \neq 1$ and if $f_1 + f_2  = 1$, then $f_1$ or $f_2$ is invertible.
\end{defn}

Suppose $\Ccat$ is Krull-Schmidt.

\begin{prop}
\label{prop:local_ring}
An object $a \in \Ccat$ is indecomposable iff its endomorphism ring $\End(a)$ is local.
\end{prop}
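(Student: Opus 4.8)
The plan is to prove the two directions separately, both within the standing assumption that $\Ccat$ is Krull-Schmidt. The backward direction is immediate from the definitions: if $\End(a)$ is local, then in particular $0 \neq 1$, so $a$ is non-zero; and if $a \cong b \oplus c$ with $b, c$ both non-zero, then the idempotents $e_b, e_c \in \End(a)$ picking out the two summands are non-invertible (they have non-trivial kernel) and satisfy $e_b + e_c = 1$, contradicting locality. Hence $a$ is indecomposable. I would write this out in a sentence or two.

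For the forward direction, suppose $a$ is indecomposable. By the Krull-Schmidt hypothesis, $a \cong a_1 \oplus \cdots \oplus a_n$ where each $\End(a_i)$ is local; since $a$ is non-zero at least one $a_i$ is non-zero, and since $a$ is indecomposable we cannot have two non-zero summands, so $n = 1$ (after discarding any zero summands, which are absorbed), i.e. $a \cong a_1$ with $\End(a_1)$ local. Locality of endomorphism rings transports across isomorphism (conjugation by the iso $a \to a_1$ is a ring isomorphism $\End(a) \to \End(a_1)$), so $\End(a)$ is local.

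The main subtlety — and the step I would be most careful about — is the bookkeeping around zero summands and the uniqueness aspect built into the Krull-Schmidt definition as stated: the definition requires \emph{each} decomposition into a finite direct sum to have local endomorphism pieces, so I must make sure I am invoking it for an actual decomposition of $a$ and then using indecomposability to collapse it to length one. One technical point worth stating explicitly is that a zero object has endomorphism ring the zero ring, in which $0 = 1$, so it is \emph{not} local; this is exactly why ``indecomposable'' is defined to exclude the zero object and why the collapse to $n=1$ (rather than $n \leq 1$) is legitimate. Apart from that, everything is formal manipulation of idempotents and ring isomorphisms, so I would not expect any real obstacle.
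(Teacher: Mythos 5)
Your proof is correct and follows essentially the same route as the paper's: the ``local $\Rightarrow$ indecomposable'' direction by exhibiting two non-invertible idempotents summing to the identity on a purported decomposition, and the ``indecomposable $\Rightarrow$ local'' direction by invoking the Krull--Schmidt decomposition. One small thing you do better: the paper's forward direction just says ``by definition of a Krull--Schmidt category, $\End_\Ccat(a)$ is a local ring,'' which elides the point that the defining decomposition $a \cong a_1 \oplus \cdots \oplus a_n$ must collapse to a single (non-zero) summand for an indecomposable $a$ before one can transport locality across the isomorphism $a \cong a_1$; your explicit remark that zero objects have the non-local zero endomorphism ring (so no $a_i$ can be zero, and hence $n = 1$) closes that gap cleanly.
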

\begin{proof}
Suppose $a \in \Ccat$ is decomposable.
That is, there is an isomorphism $i :a \to a_1 \oplus a_2$ such that $a_1,a_2 \neq 0$.
Define $\pi_1 : a_1 \oplus a_2 \to a_1 \oplus a_2$ 
as the endomorphism that sends the first factor to zero
and $\pi_2 : a_1 \oplus a_2 \to a_1 \oplus a_2$ as the endomorphism that sends the second factor to zero.
Then the two maps $\rho_1, \rho_2 : a \to a$, where $\rho_1 = i^{-1} \circ \pi_1 \circ i$ and $\rho_2 = i^{-1} \circ \pi_2 \circ i$,
are both non-isomorphisms in $\End_\Ccat(a)$.
However, $\rho_0 + \rho_1 :a \to a$ is an isomorphism.
We have a contradiction of locality.

Suppose $a \in \Ccat$ is indecomposable.
Then, by definition of a Krull-Schmidt category, $\End_\Ccat(a)$ is a local ring.
\end{proof}

\begin{prop}
Each object $a \in \Ccat$ is isomorphic to a finite direct sum of indecomposables.
\end{prop}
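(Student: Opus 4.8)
The plan is to read the statement straight off the definition of a Krull--Schmidt category together with Proposition~\ref{prop:local_ring}, so the proof will be very short. Fix an object $a \in \Ccat$. By the definition of a Krull--Schmidt category there is an isomorphism $a \cong a_1 \oplus a_2 \oplus \cdots \oplus a_n$ in which every endomorphism ring $\End_\Ccat(a_i)$ is local. The only thing to check before quoting Proposition~\ref{prop:local_ring} is that each summand $a_i$ is non-zero: a local ring satisfies $0 \neq 1$ by definition, hence $\End_\Ccat(a_i)$ is not the zero ring, whereas the zero object has the zero ring as its endomorphism ring. So every $a_i$ is a non-zero object whose endomorphism ring is local.

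Next I would apply Proposition~\ref{prop:local_ring} to each $a_i$: since $\End_\Ccat(a_i)$ is local, $a_i$ is indecomposable. Assembling these facts, the isomorphism $a \cong a_1 \oplus \cdots \oplus a_n$ exhibits $a$ as a finite direct sum of indecomposables, which is exactly the assertion. Finiteness of the sum is automatic because it is built into the definition of Krull--Schmidt, and no further argument (in particular, no induction on ``length'') is needed.

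I do not expect any real obstacle here: the substantive work has already been carried out in formulating the definition of a Krull--Schmidt category and in the proof of Proposition~\ref{prop:local_ring}. The single point deserving a sentence of care is the non-vanishing of the summands, which is why I would make the $0 \neq 1$ observation explicit rather than leave it implicit; uniqueness of such a decomposition is a separate matter and is not claimed by this proposition, so it need not be addressed.
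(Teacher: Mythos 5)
Your proof is correct and follows the same route as the paper: quote the Krull--Schmidt definition to get $a \cong a_1 \oplus \cdots \oplus a_n$ with each $\End_\Ccat(a_i)$ local, then apply Proposition~\ref{prop:local_ring} to conclude each summand is indecomposable. Your extra remark that $0 \neq 1$ in a local ring rules out zero summands is a small welcome refinement the paper leaves implicit.
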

\begin{proof}
By definition of a Krull-Schmidt category, $a \cong a_1 \oplus a_2 \oplus \cdots \oplus a_n$ where each
$\End_\Ccat(a_i)$ is a local ring.
By Proposition \ref{prop:local_ring}, each $a_i$ is indecomposable.
\end{proof}

\begin{thm}[Krull-Schmidt]
Suppose an object $c \in \Ccat$ is isomorphic to $a_1 \oplus a_2 \oplus \cdots \oplus a_m$ and $b_1 \oplus b_2 \oplus \cdots \oplus b_n$,
where each $a_i$ and $b_j$ are indecomposable.
Then $m = n$, and there is a permutation $p: [m] \to [n]$ such that $a_i \cong b_{p(i)}$.
\end{thm}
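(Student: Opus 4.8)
The plan is to induct on $m$, the number of summands in the first decomposition. For the base case $m=1$, the object $c\cong a_1$ is indecomposable, so writing $c\cong b_1\oplus(b_2\oplus\cdots\oplus b_n)$ with every $b_j\neq 0$ would contradict indecomposability unless $n=1$, and then $a_1\cong b_1$. For the inductive step I would record the structure maps of the two decompositions: inclusions $\iota_i\colon a_i\to c$, projections $\pi_i\colon c\to a_i$ with $\pi_i\iota_i=\id_{a_i}$ and $\sum_i\iota_i\pi_i=\id_c$, and similarly $\kappa_j\colon b_j\to c$, $\rho_j\colon c\to b_j$ with $\rho_j\kappa_j=\id_{b_j}$ and $\sum_j\kappa_j\rho_j=\id_c$.

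The first step is to locate an index $j$ with $a_1\cong b_j$. Expanding $\id_{a_1}=\pi_1\iota_1=\sum_j(\pi_1\kappa_j)(\rho_j\iota_1)$ inside the local ring $\End_\Ccat(a_1)$ (local by Proposition~\ref{prop:local_ring}), and using the fact---provable by a short induction from the two-term defining property of a local ring---that when a finite sum of ring elements is a unit some summand is a unit, we obtain an index $j$, which after reindexing we take to be $1$, such that $(\pi_1\kappa_1)(\rho_1\iota_1)$ is an automorphism of $a_1$. Then $\rho_1\iota_1\colon a_1\to b_1$ is a split monomorphism; since $b_1$ is indecomposable a split monomorphism into $b_1$ is either $0$ or an isomorphism, and it is not $0$ because the composite above is invertible, so $\rho_1\iota_1$ is an isomorphism. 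Consequently $\pi_1\kappa_1\colon b_1\to a_1$ is an isomorphism as well.

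Next comes the exchange lemma: $c\cong b_1\oplus a_2\oplus\cdots\oplus a_m$. Identifying $c=a_1\oplus N$ with $N=\iota_2(a_2)\oplus\cdots\oplus\iota_m(a_m)$, I would consider the endomorphism $u=\kappa_1(\rho_1\iota_1)\pi_1+(1-\iota_1\pi_1)$ of $c$. In block form with respect to $a_1\oplus N$ it is lower triangular with diagonal entries $(\pi_1\kappa_1)(\rho_1\iota_1)$ and $\id_N$, both invertible, so $u$ is an automorphism of $c$; moreover $u$ carries the first summand onto $\kappa_1(b_1)$ (because $\rho_1\iota_1$ is onto $b_1$) and fixes $N$, whence $c=\kappa_1(b_1)\oplus N\cong b_1\oplus a_2\oplus\cdots\oplus a_m$. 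I expect this block-matrix verification, together with the identification of $\kappa_1(b_1)$ with $u$ of the first summand, to be the fiddliest part---the main obstacle.

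Finally, from $b_1\oplus a_2\oplus\cdots\oplus a_m\cong c\cong b_1\oplus b_2\oplus\cdots\oplus b_n$ I would cancel $b_1$, using the standard fact that an object with local endomorphism ring cancels from direct sums in an additive category (again a consequence of the two-term locality property, now in $\End_\Ccat(b_1)$), to get $a_2\oplus\cdots\oplus a_m\cong b_2\oplus\cdots\oplus b_n$. The induction hypothesis then gives $m-1=n-1$ and a bijection of $\{2,\dots,m\}$ with $\{2,\dots,n\}$ matching the remaining summands up to isomorphism, which together with $a_1\cong b_1$ assembles into the required permutation $p\colon[m]\to[n]$, completing the induction.
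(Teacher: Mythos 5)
Your argument is correct and follows essentially the same route as the paper's: you locate a summand $b_j \cong a_1$ by expanding $\id_{a_1}$ as a sum in the local ring $\End_\Ccat(a_1)$ and invoking locality, exactly as the paper does with its maps $h_j$, and then descend by induction, which is what the paper compresses into ``Quotient by $a_1$ and $b_j$. Repeat.'' Your explicit exchange automorphism $u$ merely spells out that terse final step; note that the internal decomposition $c = \kappa_1(b_1) \oplus N$ it produces even lets you skip the abstract cancellation lemma, since both complements are then split cokernels of the same inclusion $\kappa_1$.
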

\begin{proof}
By definition of an additive category, 
we have canonical projections $\pi_i : \oplus_i a_i \to a_i$ and $\rho_j : \oplus_j b_j \to b_j$ and canonical inclusions
$\mu_i : a_i \to \oplus_i a_i$ and $\nu_j : b_j \to \oplus_j b_j$.
Furthermore $\mu_j \circ \pi_i$ and
$\nu_j \circ \rho_i$ are the identity on $a_i$ and $b_i$, respectively, iff $i = j$.
Let $f : a_1 \oplus a_2 \oplus \cdots \oplus a_m \to b_1 \oplus b_2 \oplus \cdots \oplus b_n$ be an isomorphism.

Define $h_j : a_1 \to a_1$ as $h_j = \pi_1 \circ f^{-1} \circ   \nu_j \circ \rho_j  \circ f \circ \mu_1$.
Let $h = \sum_j h_j: a_1 \to a_1$.
Observe $h$ is an isomorphism.
By locality, there is an index $j$ such that $h_j$ is an isomorphism.
This means $a_1 \cong b_j$ and we specify $p(1) = j$.
Quotient by $a_1$ and $b_j$. Repeat.
\end{proof}

\end{document}